\date{April 27, 2020}
\title[Cuspidal edges  with the same first fundamental forms 
]{%
Cuspidal edges  with the same first fundamental forms 
along a  knot
}
\author{A.~Honda}%{Atsufumi Honda}
\address[Atsufumi Honda]{
Department of Applied Mathematics, 
Faculty of Engineering, Yokohama National University,
79-5 Tokiwadai, Hodogaya, Yokohama 240-8501, Japan
}
\email{honda-atsufumi-kp@ynu.ac.jp}
\author{K.~Naokawa}%{Kosuke Naokawa}
\address[Kosuke Naokawa]{%
Department of Computer Science, 
Faculty of Applied Information Science,
Hiroshima Institute of Technology,  
2-1-1 Miyake, Saeki, Hiroshima, 731-5193, Japan
}
\email{k.naokawa.ec@cc.it-hiroshima.ac.jp}
\author{K. Saji}%{Kentaro Saji}
\address[Kentaro Saji]{%
  Department of Mathematics,
  Faculty of Science,
  Kobe University,
  Rokko, Kobe 657-8501, Japan}
\email{saji@math.kobe-u.ac.jp}
\author{M. Umehara}%{Masaaki Umehara}
\address[Masaaki Umehara]{%
  Department of Mathematical and Computing Sciences,
  Tokyo Institute of Technology,
  Tokyo 152-8552, Japan}
\email{umehara@is.titech.ac.jp}
\author{K. Yamada}%{Kotaro Yamada}
\address[Kotaro Yamada]{%
  Department of Mathematics,
  Tokyo Institute of Technology,
  Tokyo 152-8551, Japan}
\email{kotaro@math.titech.ac.jp}
\keywords{
  {singularity},
  {wave front},
  {cuspidal edge},
  {first fundamental form}}
\subjclass[2010]{Primary 57R45; Secondary 53A05}
\thanks{%
The first author was partially supported by 
Grant-in-Aid for Early-Career Scientists
 No.~19K14526.
The second author 
was partially supported by 
Grant-in-Aid for Young Scientists (B), No.~17K14197,
and the third author
was 
partially supported by 
(C) No.\ 18K03301 from JSPS.
The fifth author 
was partially 
supported by the Grant-in-Aid for 
Scientific Research (B) No.\ 17H02839.
}%
\newcommand{\op}[1]{{\operatorname{#1}}}
\newcommand{\R}{\boldsymbol{R}}
\newcommand{\Z}{\boldsymbol{Z}}
\newcommand{\mc}[1]{{\mathcal #1}}
\newcommand{\mb}[1]{{\mathbf #1}}
\newcommand{\pmt}[1]{{\begin{pmatrix} #1  \end{pmatrix}}}
\renewcommand{\phi}{\varphi}
\renewcommand{\epsilon}{\varepsilon}
\renewcommand{\det}{\op{det}}
\numberwithin{equation}{section}
\newtheorem{theorem}{Theorem}[section]
\newtheorem{proposition}[theorem]{Proposition}
\newtheorem{corollary}[theorem]{Corollary}
\newtheorem{lemma}[theorem]{Lemma}
\theoremstyle{definition}
\newtheorem{defi}[theorem]{Definition}
\newtheorem{remark}[theorem]{Remark}
\newtheorem{example}[theorem]{Example}
\newtheorem*{acknowledgments}{Acknowledgments}
\begin{document}
\maketitle
\begin{abstract}
Letting $C$ be a compact $C^\omega$-curve embedded in $\R^3$
($C^\omega$ means real analyticity),
we consider a $C^\omega$-cuspidal edge $f$ along $C$.
When $C$ is non-closed,
in the authors' previous works,
the local existence of three distinct cuspidal edges along $C$
whose first fundamental forms coincide with that of $f$
was shown, under a certain reasonable assumption on $f$.
In this paper, if $C$ is closed, that is, $C$ is a knot,
we show that there exist infinitely many cuspidal edges 
along $C$ having the same first fundamental form as that of $f$
such that their images are non-congruent to each other, in general.
\end{abstract}

\section*{Introduction}
Let us first introduce some terminology.
By \lq$C^r$-differentiablity\rq\
we mean $C^\infty$-differentiability if $r=\infty$
and real analyticity if $r=\omega$.	

Let $f:U\to \R^3$ be a $C^r$-map
from a domain $U(\subset \R^2)$ into the Euclidean 
$3$-space $\R^3$.
A point $p\in U$ is called a {\it cuspidal edge} if
there exist 
\begin{itemize}
\item a local $C^r$-diffeomorphism $\phi$ 
from a neighborhood $V(\subset U$) of $p$ to 
a neighborhood of the origin of $\R^2$, and
\item a local $C^r$-diffeomorphism $\Phi$ 
from an open  subset of $\R^3$ containing $f(V)$ to a
neighborhood of the origin of $\R^3$
\end{itemize} \noindent
such that $\phi(p)=(0,0)$, $\Phi\circ f(p)=(0,0,0)$
and 
$$
\Phi\circ f\circ \phi^{-1}(u,v)=f_C(u,v),\qquad f_C(u,v):=(u^2,u^3,v).
$$
The map $f_C$ is the {\it standard cuspidal edge} whose image 
is indicated in Figure \ref{Fig:m3},~left.

\begin{figure}[htb]
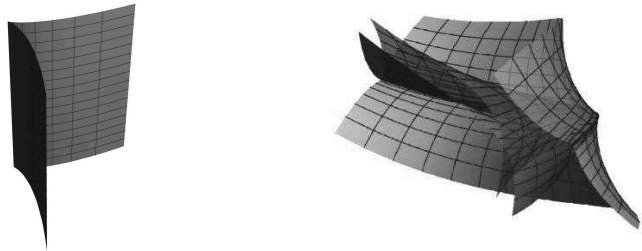

\begin{center}
        \includegraphics[height=3.3cm]{std.eps}\quad \qquad \qquad
        \includegraphics[height=3.4cm]{twin3.eps}
\caption{The standard cuspidal edge and two 
mutually congruent cuspidal edges  along a helix}
\label{Fig:m3}
\end{center}
\end{figure}

We fix $l>0$, and denote by $I:=[0,l]$ the closed interval 
and by $S^1:=\R/l\Z$ the 1-dimensional torus of period $l$.
We set
$$
J:=I \quad \text{or}\quad S^1,
$$
since we treat the bounded closed interval $I$ and the one-dimensional
torus $S^1$ uniformly.
We then fix a $C^r$-embedded curve 
$\gamma:J \to \R^3$ with positive curvature
function, and denote by $C$ the image of $\gamma$.
For a positive number $\epsilon$, we set
$$
U_\epsilon(J):=J \times (-\epsilon,\epsilon).
$$

\begin{defi}
We say that $J_1$ is a {\it $J$-interval if
$J_1:=[0,a]$ when $J=I$, and $J_1:=\R/a\Z$ when $J=S^1$,
where $a>0$.}
\end{defi}

We fix a $J$-interval $J_1$.
A {\it $C^r$-cuspidal edge along $C$} is a $C^r$-map
$
\tilde f:U_\epsilon(J_1)\to\R^3
$
such that
\begin{itemize}
\item $J_1\ni t\mapsto \tilde f(t,0)\in \R^3$ gives a parametrization of $C$, 
and
\item $(t,0)$ is a cuspidal 
edge for each $t\in J_1$.
\end{itemize}
We denote by $f$ the map germ along $C$
induced by $\tilde f$.
For the sake of simplicity, we often identify $f$ with $\tilde f$,
if it creates no confusion.
(Later, we will give a special parametrization of $f$
(cf. \eqref{eq:repF}).
We denote by $\mc F^r(C)$ the set of germs of
 $C^r$-cuspidal edges along $C$.

\begin{defi}
Let $g:U_{\epsilon'}(J_2)\to \R^3$ ($\epsilon'>0$)
be a cuspidal edge along $C$, where $J_2$ is a $J$-interval.
Then $g$ is said to be {\it right equivalent} to $f$
if there exists a diffeomorphism $\phi$
from a neighborhood $U_1(\subset U_\epsilon(J_1))$
of $J_1\times \{0\}(\subset J_1\times \R)$
to a neighborhood $U_2(\subset U_{\epsilon'}(J_2))$
of $J_2\times \{0\}(\subset J_2\times \R)$
such that $\phi(J_1\times \{0\})=J_2\times \{0\}$ and
$f=g\circ \phi$ holds on $U_1$.
We denote by $[f]$ the right
equivalence class containing $f$.
\end{defi}

\begin{defi}
The cuspidal edge germ 
$g$ is said to be {\it isometric 
to $f\in \mc F^r(C)$
if there exists a diffeomorphism $\phi$
defined on a neighborhood of $J_1\times \{0\}(\subset J_1\times \R)$
such that $\phi(J_1\times \{0\})=J_2\times \{0\}$ and
the pull-back metric $\phi^*ds^2_g$
coincides with $ds^2_f$,
where $ds^2_f$ (resp. $ds^2_g$) is the first fundamental form
of $f$ (resp. $g$), that is, it is the pull-back of
the Euclidean inner product on $\R^3$ by $f$ (resp. $g$).
We denote this relationship by $g\sim f$.
When $g=f$,
such a $\phi$ is called a {\it symmetry} of $ds^2_f$
if $\phi$ is not the identity map.}
Moreover, if 
$$
\phi(t,0)=(t,0)\qquad (t\in J_1)
$$ 
holds, $\phi$ is said to be {\it non-effective}.
Otherwise, $\phi$ is called 
an {\it effective symmetry}.
\end{defi}

The isometric relation
defined above is an actual equivalence relation
on ${\mc F}^r(C)$, as well as being the right equivalence relation. 
The following assertion holds:

\begin{proposition}
Let $f,g\in {\mc F}^r(C)$.
If $g$ is right equivalent  to $f$, then
$g$ is isometric to $f$.
\end{proposition}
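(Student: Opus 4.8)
The plan is to take the very diffeomorphism $\phi$ that witnesses the right equivalence and use it, unchanged, as the isometry. By the definition of right equivalence there is a $C^r$-diffeomorphism $\phi$ from a neighborhood $U_1$ of $J_1\times\{0\}$ onto a neighborhood $U_2$ of $J_2\times\{0\}$ with $\phi(J_1\times\{0\})=J_2\times\{0\}$ and $f=g\circ\phi$ on $U_1$. This $\phi$ already satisfies the boundary condition $\phi(J_1\times\{0\})=J_2\times\{0\}$ demanded in the definition of ``isometric'', and it is a $C^r$-diffeomorphism, hence admissible there. So the only point to verify is the identity $\phi^*ds^2_g=ds^2_f$.

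For this, write $\inner{\ }{\ }$ for the Euclidean inner product on $\R^3$, so that by definition $ds^2_f=f^*\inner{\ }{\ }$ and $ds^2_g=g^*\inner{\ }{\ }$. Since the pull-back of a (possibly degenerate) symmetric $2$-tensor is contravariantly functorial with respect to composition, i.e. $(g\circ\phi)^*=\phi^*\circ g^*$, and $f=g\circ\phi$, we obtain
$$
ds^2_f=f^*\inner{\ }{\ }=(g\circ\phi)^*\inner{\ }{\ }=\phi^*\bigl(g^*\inner{\ }{\ }\bigr)=\phi^*ds^2_g,
$$
which is exactly what is required. If one prefers a coordinate computation instead, use local coordinates on $U_1$ and $U_2$ and the chain rule $df=(dg)\circ(d\phi)$: for tangent vectors $X,Y$ one has $\inner{df(X)}{df(Y)}=\inner{dg(d\phi(X))}{dg(d\phi(Y))}=(ds^2_g)(d\phi(X),d\phi(Y))=(\phi^*ds^2_g)(X,Y)$, which is the same identity written out entrywise in the Gram matrix.

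There is essentially no obstacle here: the content of the proposition is precisely that the functorial identity $(g\circ\phi)^*=\phi^*\circ g^*$, together with the boundary-preserving property already built into the definition of right equivalence, matches the two conditions appearing in the definition of the isometric relation. The only things worth recording explicitly are that the regularity class is preserved (so the witnessing $\phi$ is legitimate for both notions) and that the degeneracy of $ds^2_f$ at the singular set causes no difficulty, since functoriality of pull-back does not require nondegeneracy. As a byproduct, one gets $g\sim f$ realized by the \emph{same} $\phi$, so any additional normalization of $\phi$ (for instance non-effectiveness) carried by the right equivalence is inherited by the isometry.
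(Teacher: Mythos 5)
Your proposal is correct and follows essentially the same route as the paper: take the diffeomorphism $\phi$ witnessing the right equivalence and verify $ds^2_f=(g\circ\phi)^*ds^2_E=\phi^*ds^2_g$ by functoriality of the pull-back. The extra remarks on the boundary condition and on degeneracy being harmless are accurate but not needed beyond what the paper records.
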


\begin{proof}
If $g$ is right equivalent  to $f$, then
there exists a local diffeomorphism $\phi$
such that $f=g\circ \phi$.
If we denote by $ds^2_E$ the Euclidean 
inner product of $\R^3$, then we have
that
$$
ds^2_f=f^*ds^2_E=(g\circ \phi)^*ds^2_E=\phi^*(g^*ds^2_E)=\phi^* ds^2_g,
$$
which proves the assertion.
\end{proof}

We then define an \lq\lq isomer" of $f\in {\mc F}^r(C)$ as follows.

\begin{defi}
For a given  $f\in {\mc F^r(C)}$,
a cuspidal edge $g\in {\mc F}^r(C)$
is called an {\it isomer} of $f$
(cf. \cite{HNSUY}) if it satisfies the 
following conditions:
\begin{enumerate}
\item $g$ is isometric to $f$ (i.e. $g\sim f$), but
\item $f$ is not right equivalent to $g$
(i.e. $[f] \ne [g]$).
\end{enumerate}
\end{defi}

A subset $A$ of $\R^3$  is said to be {\it congruent} to a subset $B(\subset \R^3)$
if there exists an isometry $T$ in $\R^3$ such that $B=T(A)$.
Moreover, we give the following definition:

\begin{defi}\label{d-c}
The image of a germ $g\in \mathcal F^r(C)$ 
is said to have the {\it same image} as 
a given  germ 
$f\in \mathcal F^r(C)$
if there exist open subsets $U_i$ ($i=1,2$) containing $J_i\times \{0\}$
such that $g(U_1)=f(U_2)$.
On the other hand,
$g$ is said to be {\it congruent} to $f$
if there exists an isometry $T$ of the Euclidean space $\R^3$
such that $T\circ g$ has the same image as $f$, as a map germ.
\end{defi}

\begin{remark}\label{rem:I}
Here, we consider the case $J=I$, that is, $C$ is non-closed.
Consider an \lq\lq admissible" $C^\omega$-germ of a cuspidal edge $f$ 
(i.e. $f$ belongs to the class $\mc F^\omega_*(C)$
defined in \eqref{eq:generic}).
If the first fundamental form of $f$ has no effective symmetries, then
there exist three distinct isomers 
$\check f, f_*, \check f_*\in \mc F^\omega(C)$
such that (cf. \cite{HNSUY})
\begin{itemize}
\item[$\diamond$] $\check f(t,v), f_*(t,v)$ and $\check f_*(t,v)$
have the same parameters as $f(t,v)$, and
\item[$\diamond$] the coefficients of the first fundamental forms of 
$\check f, f_*,\check f_*$ with respect to
the coordinate system $(t,v)$
coincide with those of $f$.
\end{itemize}
In fact, 
\begin{itemize}
\item 
$\check f$ (called the {\it dual} of $f$)
is the isomer whose cuspidal angle (cf. Definition \ref{def:angle}) 
takes the opposite sign of that of $f$,
\item 
$f_*$ (called the {\it inverse} of $f$)
is the isomer which is obtained by
reversing the orientation of 
the parametrization $\gamma$ of $C$.
The sign of the cuspidal angle of $f_*$ 
takes the same sign as that of $f$,
\item 
$\check f_*$ (called the {\it inverse dual} of $f$)
is the dual of the inverse $f_*$,
\item 
if $g$ is an isomer of $f$, then $g$ is right equivalent to
one of $\check f,\,\,f_*,\check f_*$.
\end{itemize}
The four maps $f,\check f, f_*, \check f_*$
in $\R^3$ 
are non-congruent in general. However, if $C$ admits a symmetry,
this is not true.
For example, consider a helix $C_0$ and fix a point $P_0$ on $C_0$.
Then there exists an orientation preserving isometry $T$ 
of $\R^3$ (which is not the identity map)
satisfying $T(C_0)=C_0$ and $T(P_0)=P_0$.
Then, we can construct a cuspidal edge along $C_0$ 
(cf. Remark 2.3)
such that $g:=T\circ f$ is
 an isomer of $f$.
The images of such a pair $(f,g)$ are indicated in Figure \ref{Fig:m3}, right.
\end{remark}

From now on, we consider the case that $J=S^1$, that is,
$C$ is a knot in $\R^3$,
and show that each $f\in \mc F^\omega(C)$
has infinitely many isomers 
which are mutually non-congruent, in general.

\begin{figure}[htb]
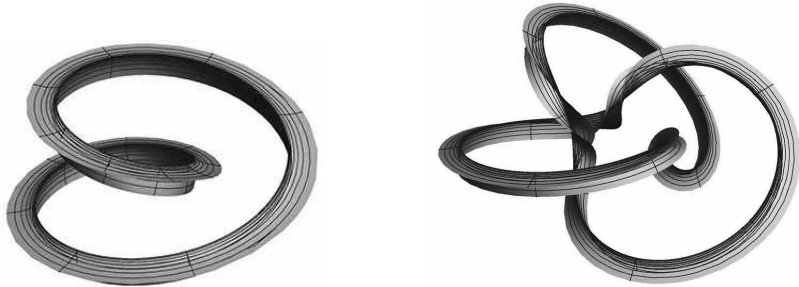

\begin{center}
        \includegraphics[height=3.3cm]{trefoil1.eps}\qquad \qquad
        \includegraphics[height=3.9cm]{trefoil3.eps}
\caption{Cuspidal edges along the  curves 
$\gamma_1$ (left) and $\gamma_2$ (right) (cf. Example \ref{eq;k})}
\label{Fig:m3b}
\end{center}
\end{figure}

\section{Results}
We set $J:=S^1$ and consider the case that 
$C(=\gamma(S^1))$ is a closed $C^r$-embedded curve with 
positive curvature function $\kappa(t)$.
We let $\mb n(t)$ (resp. $\mb b(t)$) be the unit principal 
normal (resp. unit bi-normal)
vector of $\gamma(t)$. 
We set 
$$
P_0:=\gamma(0),
$$
which is considered as a base point of $C$.
The parametrization $\gamma$ of $C$ 
gives an orientation of $C$.
For this fixed base point $P_0$ and this fixed orientation of
$C$, we would like to show that any cuspidal edge germ along $C$
can be uniquely represented using a normal form 
given as follows:

For sufficiently small  $\epsilon>0$, consider a $C^r$-map
$($called {\it Fukui's formula}, cf. \cite{F} and \cite{HNSUY}$)$
$f(t,v)$ ($(t,v)\in U_\epsilon(S^1)$)
expressed by
\begin{equation}\label{eq:repF}
f(t,v):=\gamma(t)+
(A(t,v),B(t,v)) \pmt{
\cos \theta(t) & -\sin \theta(t) \\
\sin \theta(t) & \cos \theta(t) 
}\pmt{\mb n(t) \\ \mb b(t)}
%\quad (t\in S^1,\,\, |v|<\epsilon),
\end{equation}
such that 
\begin{itemize}
\item[(a)] $f(0,0)=P_0$,
\item[(b)] $A(t,v)$, $B(t,v)$ and $\theta(t)$ are $C^r$-functions, and
\item[(c)] for each $t\in S^1$, 
$A_{vv}(t,0), B_{vvv}(t,0)$ are not equal to zero.
\end{itemize}
In this setting, the angle $\theta(t)$ in \eqref{eq:repF}
is called the {\it cuspidal angle} at $\gamma(t)$.

\begin{defi}\label{def:angle}
A map $f(t,v)$  satisfying (a), (b) and (c)
is called a {\it normal form} of the cuspidal edge along $C$
with respect to the base point  $P_0$
if 
\begin{itemize}
\item[(d)]  $t$ is an arc-length parameter of $C$,
\item[(e)]  
for each $t\in S^1$, $v$ is the normalized half-arc-length parameter
of the sectional cusp $v\mapsto (A(t,v),B(t,v))$
 (see \cite[Appendix A]{HNSUY}
 for the definition of 
normalized half-arc-length parameters), that is,
there exists a $C^r$-function $m(t,v)$ satisfying  $m(t,0)\ne 0$
such that
$A(t,v)$ and $B(t,v)$ have the following expressions:
$$
(A(t,v),B(t,v)):=\int_0^v w\Big(\cos \lambda(t,w),\sin \lambda(t,w)\Big) dw,
\,\,\, \lambda(t,v):=\int_0^v m(t,w)dw.
$$
\end{itemize}
The function $m(t,v)$ is called the {\it extended half-cuspidal
curvature function} (cf. \cite{HNSUY}).
In this situation, the {\it singular curvature} $\kappa_s(t)$
and the {\it limiting normal curvature} $\kappa_\nu(t)$
(cf. \cite{SUY}) along the
singular set of $f\in \mc F^r(C)$  
are given by (cf.~\cite{HNSUY})
\begin{equation}\label{eq:kskn}
\kappa_s(t)=\kappa(t)\cos\theta(t),\qquad
\kappa_\nu(t)=\kappa(t)\sin\theta(t).
\end{equation}
\end{defi}

The following assertion holds:

\begin{proposition}\label{prop:000}
For each  $f\in {\mc F^r(C)}$, 
there exists a unique normal form $\hat f\in {\mc F^r(C)}$
with respect to the base point $f(0,0)$
such that
\begin{itemize}
\item $[f]=[\hat f]$, 
and
\item  
the orientation of $C$ given by
the parametrization $t\mapsto \hat f(t,0)$
coincides with that induced by
the parametrization $t\mapsto f(t,0)$.
\end{itemize}
\end{proposition}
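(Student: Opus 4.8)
The plan is to produce $\hat f$ as $f\circ\phi$, where $\phi$ is a coordinate change built canonically from $f$ in four steps so that $\hat f$ becomes exactly Fukui's normal form \eqref{eq:repF} (cf.\ \cite{F,HNSUY}); uniqueness then follows because each of the four normalizations is the unique one of its kind, so $\phi$ is forced.

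For existence I would proceed as follows. First, replace $t$ by the arc-length parameter of the singular curve $t\mapsto f(t,0)$, measured from the base point $f(0,0)$ in the direction prescribed by $f$; since this curve is regular and $C^r$, this is a $C^r$-diffeomorphism onto $S^1=\R/l\Z$, it fixes the base point, it is orientation preserving, and it is uniquely determined by these requirements. Second, for $(\tau,v)$ near $J_1\times\{0\}$ solve $\inner{f(t,v)-\gamma(\tau)}{\mb e(\tau)}=0$ for $t=t(\tau,v)$ by the implicit function theorem (here $\mb e$ is the unit tangent, and the $t$-derivative of the left-hand side at $(\tau,0,\tau)$ is $\inner{\mb e(\tau)}{\mb e(\tau)}=1$); the resulting $C^r$-diffeomorphism fixes $J_1\times\{0\}$, keeps $f_v(t,0)=0$, and makes $f(t,v)-\gamma(t)$ lie in the plane spanned by $\mb n(t)$ and $\mb b(t)$, so $f=\gamma+\alpha\,\mb n+\beta\,\mb b$ with $\alpha,\beta,\alpha_v,\beta_v$ all vanishing on $v=0$. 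Third, since $v\mapsto(\alpha(t,v),\beta(t,v))$ is the sectional cusp of a cuspidal edge, it is a nondegenerate $3/2$-cusp (cf.\ \cite{SUY,HNSUY}); in particular the cuspidal direction $(\alpha_{vv}(t,0),\beta_{vv}(t,0))$ is nonzero, so let $\theta(t)$ be its angle with the $\mb n$-axis — a $C^r$-function valued in $\R/2\pi\Z$ — and rotate the frame by $\theta(t)$; now $f$ has the shape of \eqref{eq:repF} with $B_{vv}(t,0)=0$ and $A_{vv}(t,0)>0$. Fourth, for each $t$ reparametrize $v$ into the normalized half-arc-length parameter of the sectional cusp, which exists, depends $C^r$-smoothly on $t$, is sign-normalized, and fixes $v=0$, by \cite[Appendix A]{HNSUY}. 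Composing the four coordinate changes into $\phi$, the map $\hat f:=f\circ\phi$ satisfies (a)--(e), hence is a normal form with base point $f(0,0)$; moreover $[\hat f]=[f]$ since $\phi$ is a diffeomorphism, and $\hat f(\cdot,0)$ induces the same orientation of $C$ as $f$ since the last three steps fix $J_1\times\{0\}$ pointwise and the first is orientation preserving.

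For uniqueness, let $\hat f_1,\hat f_2$ be normal forms with base point $f(0,0)$, with $[\hat f_i]=[f]$, and inducing the same orientation, and write $\hat f_1=\hat f_2\circ\psi$ with $\psi=(\psi_1,\psi_2)$. Comparing the arc-length parametrizations $\hat f_i(\cdot,0)$ of $C$, which share base point and orientation, gives $\psi_1(t,0)=t$; and $\hat f_{1,v}(t,0)=0=\hat f_{2,v}(t,0)$ together with $\hat f_{2,t}(t,0)=\mb e(t)$ (since $\hat f_2(\cdot,0)$ is arc-length parametrized) gives $\psi_{1,v}(t,0)=0$. Since for any normal form $\hat f_i(t,v)-\gamma(t)$ lies in the normal plane at $\gamma(t)$, the point $\hat f_1(t,v)=\hat f_2(\psi(t,v))$ lies in the normal planes at both $\gamma(t)$ and $\gamma(\psi_1(t,v))$; as $s\mapsto\inner{\hat f_1(t,v)-\gamma(s)}{\mb e(s)}$ has $s$-derivative $-1$ at $(t,0,t)$, hence is locally strictly monotone, it has a unique zero near $s=t$, forcing $\psi_1\equiv t$. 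Then $\hat f_1(t,v)=\hat f_2(t,\psi_2(t,v))$ gives $\hat f_{1,vv}(t,0)=\psi_{2,v}(t,0)^2\,\hat f_{2,vv}(t,0)$; but in a normal form $\hat f_{i,vv}(t,0)$ is a unit vector of the form $\cos\theta_i(t)\,\mb n(t)-\sin\theta_i(t)\,\mb b(t)$, so $\psi_{2,v}(t,0)=\pm1$ and $\theta_1=\theta_2$. Hence $(A_1,B_1)(t,v)=(A_2,B_2)(t,\psi_2(t,v))$, and the uniqueness of the normalized half-arc-length parameter gives $\psi_2(t,v)=v$; thus $\hat f_1=\hat f_2$.

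I expect the main obstacle to be the non-degeneracy input underlying the third step and its uniqueness counterpart: one must use the criterion for cuspidal edges of \cite{SUY} to know that the planar section obtained in the second step is genuinely a nondegenerate $3/2$-cusp — so that the cuspidal angle $\theta(t)$ and the half-arc-length reparametrization are actually defined and depend $C^r$-smoothly on $t$, hence patch into a single coordinate change along the whole knot $C$ — and one must invoke the sign convention built into the normalized half-arc-length parameter to eliminate the residual $v\mapsto-v$ ambiguity (corresponding to $\psi_{2,v}(t,0)=-1$) in the uniqueness step.
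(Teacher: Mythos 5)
Your proof is correct and follows the route the paper intends: the paper's own proof is a one\hyp{}sentence observation that the base point, the arc-length parameter $t$, and the normalized half-arc-length parameter $v$ pin down $\hat f$, and your four-step construction together with the detailed uniqueness argument is exactly that observation spelled out (existence being deferred, as in the paper, to Fukui's formula and \cite[Appendix A]{HNSUY}). The caveat you already flag --- that the residual $v\mapsto -v$ ambiguity must be eliminated by the sign convention built into the normalized half-arc-length parameter --- is indeed the only point where the uniqueness claim leans on the cited appendix rather than on conditions (a)--(e) as stated here (compare Lemma \ref{lem:0}, where that reflection is still allowed when only images are compared), so it is right to make that dependence explicit.
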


\begin{proof}
The uniqueness of such an $\hat f$ follows 
from the fact that $\hat f(0,0)=f(0,0)$ and $\hat f(t,0)=\gamma(t)$,
since $t$ is an arc-length parameter of $\gamma$
and $v$ is the normalized half-arc-length parameter
of the sectional cusps of $\hat f$.
\end{proof}

We prepare a lemma:

\begin{lemma}\label{lem:0}
Let $f,g\in \mc F^r(C)$ be 
two normal forms of cuspidal edges along $C$.
If the image of $f$ coincides with that of $g$ and
$f(t,0)=g(t,0)$ holds for $t\in S^1$,
then either $f(t,v)=g(t,v)$ or $f(t,v)=g(t,-v)$ holds.
\end{lemma}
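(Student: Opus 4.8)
The plan is to reduce the assertion to the elementary fact that a planar $3/2$-cusp has, up to the substitution $v\mapsto -v$, a unique normalized half-arc-length parametrization, after showing that the hypotheses force the sectional cusps of $f$ and $g$ in the normal planes of $C$ to coincide. Since $f(t,0)=g(t,0)$ for all $t$ and both $f,g$ are normal forms, $\gamma(t):=f(t,0)=g(t,0)$ is the arc-length parametrization of $C$ appearing in \eqref{eq:repF} for both, so $f$ and $g$ are written as in \eqref{eq:repF} with the \emph{same} principal normal and binormal $\mb n(t),\mb b(t)$; I write $\theta_f,A_f,B_f,m_f$ (resp.\ $\theta_g,A_g,B_g,m_g$) for the data of $f$ (resp.\ $g$).

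First I would show that, for each $t\in S^1$ and $\epsilon$ small enough, the germ at $\gamma(t)$ of $W\cap N_t$ equals the sectional cusp $v\mapsto f(t,v)$, and also equals $v\mapsto g(t,v)$; here $W$ is the common image and $N_t:=\gamma(t)+\op{span}\{\mb n(t),\mb b(t)\}$ is the normal plane of $C$ at $\gamma(t)$. By \eqref{eq:repF} the curve $v\mapsto f(t,v)$ lies entirely in $N_t$, so it is contained in $W\cap N_t$. Conversely, with $t$ fixed, $H(s,v):=\inner{f(s,v)-\gamma(t)}{\gamma'(t)}$ satisfies $H(t,0)=0$ and $H_s(t,0)=|\gamma'(t)|^2=1\neq 0$, so near $(t,0)$ the zero set of $H$ is a graph $s=\tau(v)$; since $H(t,v)\equiv 0$ by the first remark, $\tau\equiv t$, whence $f^{-1}(N_t)$ agrees near $(t,0)$ with $\{t\}\times(-\epsilon,\epsilon)$. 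As $f$ and $g$ restrict to embeddings of $U_\epsilon(S^1)$ for $\epsilon$ small (uniformly in $t$, using compactness of $C$ and the local injectivity of cuspidal edges), this yields the stated identification of germs, and since the images of $f$ and $g$ agree, the two sectional cusps agree.

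Next I would recover the cuspidal angle and reduce to the planar statement. Condition~(e) gives $A_f(t,v)=\tfrac12 v^2+O(v^3)$ and $B_f(t,v)=O(v^3)$ with $\partial_v^3B_f(t,0)=2m_f(t,0)\neq 0$ by~(c), so $v\mapsto(A_f(t,v),B_f(t,v))$ is a genuine $3/2$-cusp opening in the direction $(1,0)$ of the $(A_f,B_f)$-plane; hence, in the coordinates of $N_t$ given by $(\mb n(t),\mb b(t))$, the sectional cusp of $f$ opens in the direction $\cos\theta_f(t)\,\mb n(t)-\sin\theta_f(t)\,\mb b(t)$. The opening direction of a $3/2$-cusp depends only on its image, so by the previous step this vector coincides with the analogous one for $g$; therefore $\theta_f\equiv\theta_g\pmod{2\pi}$ and the rotation matrices in \eqref{eq:repF} for $f$ and $g$ are equal. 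Applying the inverse of this common rotation to the (equal) sectional cusps, the planar curves $v\mapsto(A_f(t,v),B_f(t,v))$ and $v\mapsto(A_g(t,v),B_g(t,v))$ have the same image $\Gamma_t$ for every $t$, and by~(e) both are normalized half-arc-length parametrizations of $\Gamma_t$. But $|\partial_v(A,B)|=|v|$ forces the arc-length along either branch of $\Gamma_t$, measured from the cusp point, to equal $v^2/2$; hence such a parametrization is determined by which branch carries $v>0$, and we conclude that for each $t$ either $(A_g,B_g)(t,v)\equiv(A_f,B_f)(t,v)$ or $(A_g,B_g)(t,v)\equiv(A_f,B_f)(t,-v)$, i.e.\ $f(t,v)=g(t,\epsilon(t)v)$ for some $\epsilon(t)\in\{\pm 1\}$.

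Finally I would show that $\epsilon$ is constant. A direct computation from \eqref{eq:repF} and~(e) gives $f_{vvv}(t,0)=2m_f(t,0)\bigl(\sin\theta_f(t)\,\mb n(t)+\cos\theta_f(t)\,\mb b(t)\bigr)$, which is nonzero by~(c), and similarly for $g$; since $\theta_f\equiv\theta_g$, the vectors $f_{vvv}(t,0)$ and $g_{vvv}(t,0)$ are nonzero and parallel. Differentiating $f(t,v)=g(t,\epsilon(t)v)$ three times in $v$ at $v=0$ gives $f_{vvv}(t,0)=\epsilon(t)\,g_{vvv}(t,0)$, so $\epsilon(t)$ is unambiguous (hence consistently defined) and equals $\inner{f_{vvv}(t,0)}{g_{vvv}(t,0)}/|g_{vvv}(t,0)|^2$, a continuous function of $t$. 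A continuous $\{\pm 1\}$-valued function on the connected set $S^1$ is constant, so $f(t,v)=g(t,v)$ for all $(t,v)$ or $f(t,v)=g(t,-v)$ for all $(t,v)$. The step I expect to be the main obstacle is the first one: ruling out that $W\cap N_t$ acquires, near $\gamma(t)$, a component other than the sectional cusp, which is exactly where one needs that $f$ and $g$ are embeddings near $S^1\times\{0\}$ and that $\epsilon$ can be chosen uniformly in $t$; the opening-direction computation, the uniqueness of the normalized half-arc-length parametrization, and the connectedness argument are then routine.
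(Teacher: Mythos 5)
Your proof is correct and follows essentially the same route as the paper: the paper's argument simply asserts that, since both maps are normal forms with the same image, the arc-length and normalized half-arc-length normalizations force $g(t,v)=f(\sigma t+a,\sigma'v)$, and then uses $f(t,0)=g(t,0)$ to fix $\sigma=1$, $a=0$. You are supplying the details behind that one assertion (identification of the sectional cusps via the normal planes, recovery of the cuspidal angle from the opening direction, uniqueness of the half-arc-length parametrization up to $v\mapsto-v$, and constancy of the sign by continuity), all of which check out.
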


\begin{proof}
Suppose that the images of the two maps coincide.
Since $f$ and $g$ are written in normal forms,
the fact that $t$ is an arc-length parameter and
$v$ is a half-arc-length parameter implies 
$$
g(t,v)=f(\sigma t+a,\sigma'v) \qquad (t\in S^1),
$$
where $a\in S^1$ and $\sigma,\sigma'\in \{1,-1\}$.
Since $f(t,0)=g(t,0)$ holds for $t\in S^1$,
we have $\sigma=1$ and $a=0$, proving the assertion.
\end{proof}

As a consequence, the following assertion holds:

\begin{proposition}\label{prop:eq}
Let $f,g\in \mc F^r(C)$ be two cuspidal edge germs.  
Then $g$ is right equivalent to $f$ if and only if
$g$ has the same image as $f$.
\end{proposition}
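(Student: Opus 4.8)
The plan is to establish the easy implication directly and reduce the substantial one to Lemma~\ref{lem:0} after passing to normal forms.

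If $g$ is right equivalent to $f$, take the diffeomorphism $\phi\colon U_1\to U_2$ with $f=g\circ\phi$ as in the definition of right equivalence. Then $f(U_1)=g(\phi(U_1))=g(U_2)$, and since $U_1$ (resp.\ $U_2=\phi(U_1)$) is an open neighbourhood of the zero-section in the source of $f$ (resp.\ of $g$), this is exactly the statement that $g$ has the same image as $f$ in the sense of Definition~\ref{d-c}.

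Conversely, suppose $g$ has the same image as $f$. By Proposition~\ref{prop:000} we may replace $f$ and $g$ by their normal forms $\hat f$ and $\hat g$: this changes neither right-equivalence class, and --- using the implication just proved together with the (evident) transitivity of the ``same image'' relation for germs along $C$ --- it does not affect the hypothesis. So assume henceforth that $f$ and $g$ are normal forms with the same image. Both $f(\cdot,0)$ and $g(\cdot,0)$ are arc-length parametrizations of the closed curve $C$, hence there are $\sigma\in\{1,-1\}$ and $a\in S^1$ with $g(t,0)=f(\sigma t+a,0)$. Put $\psi(t,v):=(\sigma t+a,v)$ and $g_1:=f\circ\psi$. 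The key technical claim is that $g_1$ is again a normal form of a cuspidal edge along $C$: a shift $t\mapsto t+a$ visibly preserves the conditions (a)--(e), while the orientation reversal $t\mapsto -t$ (needed when $\sigma=-1$) sends the Frenet frame $(\mb n(t),\mb b(t))$ to $(\mb n(-t),-\mb b(-t))$, and a short computation shows that this sign can be absorbed by replacing $(A,B,\theta)$ with $(A(-t,\cdot),\,-B(-t,\cdot),\,-\theta(-t))$, which still satisfies (c) and (e). Granting this, $\psi$ preserves the zero-section, so $[g_1]=[f]$; moreover $g_1$ has the same image as $f$, hence as $g$, and $g_1(t,0)=f(\sigma t+a,0)=g(t,0)$. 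Lemma~\ref{lem:0} now applies to the pair $(g_1,g)$ and yields $g(t,v)=g_1(t,v)$ or $g(t,v)=g_1(t,-v)$; in either case $g$ is right equivalent to $g_1$, and therefore $[g]=[g_1]=[f]$, as desired.

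The only real obstacle is the technical claim in the previous paragraph, namely that the reparametrization $\psi$ carries a normal form to a normal form when $\sigma=-1$ --- equivalently, that reversing the orientation of $C$ is compatible with the normal-form conditions. (One can also bypass this verification entirely: the identity $g(t,v)=f(\sigma t+a,\sigma'v)$ with $\sigma,\sigma'\in\{1,-1\}$ and $a\in S^1$ already occurs inside the proof of Lemma~\ref{lem:0}, before the hypothesis $f(t,0)=g(t,0)$ is invoked, and it immediately exhibits $g=f\circ\psi'$ with $\psi'(t,v)=(\sigma t+a,\sigma'v)$ a diffeomorphism preserving the zero-section.)
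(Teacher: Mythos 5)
Your proof is correct and follows essentially the same route as the paper: both directions are handled by reducing to normal forms via Proposition~\ref{prop:000} and then invoking the rigidity of the arc-length/half-arc-length parametrization through Lemma~\ref{lem:0} (the paper replaces $\hat g(t,v)$ by $\hat g(-t,v)$ where you instead precompose $f$ with $\psi$, a purely cosmetic difference). If anything, you are more careful than the paper on the one technical point both arguments need --- that the orientation-reversing reparametrization still yields a normal form (or, as in your bypass, that the identity $g(t,v)=f(\sigma t+a,\sigma'v)$ from the proof of Lemma~\ref{lem:0} already gives the required right equivalence directly).
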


\begin{proof}
The \lq\lq if"-part is obvious.
So it is sufficient to show the \lq\lq only if"-part.  
By Proposition \ref{prop:000},
there exist normal forms $\hat f$ and $\hat g$ of cuspidal edges
along $C$ such that 
$$
[f]=[\hat f],\quad  [g]=[\hat g],\quad \hat f(0,0)=\hat g(0,0).
$$
We suppose that $g$ has the same image as $f$.
Then $\hat g$ also has the same image as $\hat f$.
Replacing $\hat g(t,v)$ by $\hat g(-t,v)$ if necessary,
we may assume that
$\hat g(t,0)=\hat f(t,0)$ for $t\in S^1$.
Thus, by Lemma \ref{lem:0}, 
we have $\hat g(t,v)=\hat f(t,\pm v)$, which implies $[g]=[f]$.
\end{proof}

\begin{corollary}\label{cor:X}
Let $f,g\in \mc F^r(C)$ be two cuspidal edge germs.  
If $g$ is congruent to $f$, then 
there exists an isometry $T$ of $\R^3$
such that $T\circ g$ is right equivalent to $f$.
\end{corollary}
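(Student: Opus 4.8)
The plan is to show that the very isometry $T$ supplied by Definition \ref{d-c} already makes $T\circ g$ right equivalent to $f$: by definition of congruence, $T\circ g$ has the same image as $f$ as a map germ, so \emph{once we know that $T\circ g$ again belongs to $\mc F^r(C)$}, the conclusion is immediate from Proposition \ref{prop:eq}. Thus the whole content of the corollary is the assertion $T\circ g\in\mc F^r(C)$. Being a cuspidal edge is preserved under post-composition with a diffeomorphism of $\R^3$ (in the definition of cuspidal edge, replace the target diffeomorphism $\Phi$ by $\Phi\circ T^{-1}$), so $(t,0)$ is a cuspidal edge of $T\circ g$ for every $t$, and $t\mapsto T\circ g(t,0)=T(\gamma(t))$ parametrizes $T(C)$; hence $T\circ g\in\mc F^r(T(C))$, and it remains only to prove $T(C)=C$.

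For this I would use the following structural fact, which is visible from the normal form $f_C(u,v)=(u^2,u^3,v)$: for a cuspidal edge germ along the compact embedded curve $C$, after shrinking the neighborhood of the singular set, the image is a topological annulus, the singular set of the map is exactly the singular curve, and the image is a $C^1$-regular embedded surface precisely off the singular image, which equals $C$. (Indeed, off $\{u=0\}$ the map $f_C$ is a $C^\omega$-immersion and an embedding, whereas at a point of $\{u=0\}$ the tangent cone of the image of $f_C$ is a half-plane, so the image is not $C^1$ there; compactness of $C$, via a tube around $C$, makes the choice of neighborhood uniform and rules out spurious self-intersections.) Applying this to $f$ and to $g$, the non-$C^1$ locus of $\op{image}(f)$ is $C$ and the non-$C^1$ locus of $\op{image}(g)$ is $C$. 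Since $g$ is congruent to $f$, we have $\op{image}(T\circ g)=\op{image}(f)$ on suitable neighborhoods of the singular curves; as $T$ restricts to a $C^\infty$-diffeomorphism from $\op{image}(g)$ onto $\op{image}(T\circ g)=\op{image}(f)$, it carries the non-$C^1$ locus of $\op{image}(g)$, namely $C$, onto the non-$C^1$ locus of $\op{image}(f)$, namely $C$. Hence $T(C)=C$.

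Therefore $T\circ g\in\mc F^r(C)$ and has the same image as $f$, so Proposition \ref{prop:eq} gives that $T\circ g$ is right equivalent to $f$, which is exactly the assertion. The only nontrivial ingredient is the structural fact quoted above — that the singular image of a cuspidal edge germ along $C$ is recovered intrinsically from its image as the non-$C^1$ locus; I expect its verification (in particular the tube argument ensuring that for $\epsilon$ small the germ along $C$ is embedded, so that transverse self-intersections do not pollute the non-$C^1$ locus) to be the main, and essentially the only, obstacle.
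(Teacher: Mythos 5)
Your proposal is correct and follows the paper's own route: the paper's proof of this corollary is precisely the one-line reduction you describe, namely applying Proposition \ref{prop:eq} to $T\circ g$ and $f$, where $T$ is the isometry furnished by the definition of congruence. The only difference is that you additionally verify $T(C)=C$ (so that $T\circ g\in \mc F^r(C)$ and Proposition \ref{prop:eq} is applicable) by identifying $C$ intrinsically as the non-$C^1$ locus of the embedded image; the paper leaves this point implicit, so your extra argument fills a small gap in the same proof rather than taking a different approach.
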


\begin{proof}
Suppose that $g$ is congruent to $f$.
Then, there exists an isometry $T$ of $\R^3$ 
such that $T\circ g$ has the same image as $f$.
By Proposition \ref{prop:eq},
we can conclude that
$T\circ g$ is right equivalent to $f$.
\end{proof}

We fix $f\in \mc F^r(C)$.
Then there exists a normal form $\hat f$ of the 
cuspidal edge along $C$
such that $[\hat f]=[f]$.
The expression $\hat f(t,v)$ means that $t$ is 
the arc-length parameter of $C$ and
$v$ is the normalized half-arc-length parameter of the
sectional cusps.
Let
$\kappa_s:S^1\to \R$
be the singular curvature function
of $\hat f$ along $C$.
By \eqref{eq:kskn},
$|\kappa_s(t)|\le \kappa(t)$ holds,
and  $\kappa_s(t)$ depends only on the first 
fundamental form of $f$ (cf. \eqref{eq:2} and \cite{SUY}).
We then consider the condition
\begin{equation}\label{eq:mM}
\max_{t\in S^1}|\kappa_s(t)| < \min_{t\in S^1}\kappa(t),
\end{equation}
and define the subclass
\begin{equation}\label{eq:generic}
\mc F^r_*(C):=\{f\in \mc F^r(C)\,;\, 
\mbox{$f$ satisfies \eqref{eq:mM}}
\}
\end{equation}
of $\mc F^r(C)$. 
A germ of a cuspidal edge $f\in \mc F^r(C)$
is called {\it admissible} if it belongs to
this subclass $\mc F^r_*(C)$.
For $f\in \mc F^r_*(C)$, we may assume that
its cuspidal angle $\theta(t)$ satisfies
\begin{equation}\label{eq:angle}
0<|\theta(t)|<\pi\qquad (t\in S^1).
\end{equation}
It should be remarked that cuspidal edges with
constant Gaussian curvature 
satisfy $\kappa_\nu=0$, that is, $|\kappa_s|=\kappa$ on $S^1$.
In particular, such surfaces do not 
belong to $\mc F^r_*(C)$.
(If $f$ is of constant Gaussian curvature, then 
$\kappa_\nu$ vanishes identically. For such a case, see \cite{B}.)

\begin{example}\label{eq;k}
Consider the following $2\pi$-periodic curves
giving a series of torus knots ($n:=2m-1$, $m=1,2,3,...$):
\begin{equation}\label{eq:cm}
\gamma_m(t):=\biggl( (2+\cos n t)\cos 2t, (2+\cos n t)\sin 2t,\sin n t\biggr)
\quad (t\in \R),
\end{equation}
and denote by $C_m$ their images.
The curve $C_2$ gives a trefoil knot.
The cuspidal edges $f_m$ for $m=1,2$  along $C_m$
obtained from \eqref{eq:repF} by substituting  
$$
A(t,v)=t^2,\quad B(t,v)=t^3,\quad \theta=\pi/4
$$ are indicated in Figure 2. 
Each of $f_m$ ($m\ge 1$)
belongs to the class $\mc F^r_*(C_m)$ as a map germ,
since we have chosen $\theta$ so that $0<\theta<\pi/2$.
\end{example}

\medskip
\begin{defi}\label{def:curveS}
We say that $C$ has a {\it symmetry} if 
there exists an isometry $T$ of the Euclidean space $\R^3$ 
such that $T(C)=C$ and $T$ is not the identity map.
On the other hand,
a $C^r$-function $\mu:S^1(=\R/l\Z)\to \R$ is said to have
a {\it symmetry} if there exists a constant $c\in (0,l)$
or $c'\in [0,l)$
 such that
$$
\mu(t)=\mu(t+c) \quad \text{or} \quad \mu(t)=\mu(c'-t)
$$
holds for $t\in \R$.
\end{defi}

If $C$ has a symmetry $T$, and if there is a point $P\in C$
such that $T(P)\ne P$, then $\kappa(t)$
also admits a symmetry. 
Our main result is as follows:

\medskip
\begin{theorem}\label{thm:main}
Let $C$ be the image of a closed $C^\omega$-curve embedded in $\R^3$,
and let $g$ be a cuspidal edge germ belonging to $\mc F^\omega_*(C)$. 
Then there are four continuous $1$-parameter
families of real analytic cuspidal edges 
$
\{f^i_{P}\}_{P\in C}
$
$(i=1,2,3,4)$ which satisfy the following properties:
\begin{enumerate}
\item[(i)]
Each $f^i_{P}$ $(i\in \{1,2,3,4\},\, P\in C)$
belongs to $\mc F^\omega_*(C)$ and is isometric to $g$. 
Moreover, there exist $i_0\in \{1,2,3,4\}$ and $P_0\in C$
such that $f^{i_0}_{P_0}$ is right equivalent to $g$.
\item[(ii)]
If $h\in \mc F^\omega_*(C)$ is an isomer of $g$, then
$h$ is right equivalent to a cuspidal edge germ
belonging to one of these four families.
\item[(iii)]  Suppose that  $C$ is not a circle. If
the first fundamental form $ds^2$ of $g$ admits
at most finitely many effective symmetries $($in particular, this assumption 
follows if the singular curvature 
function $\kappa_s$ of $g$ is not constant,
see Remark \ref{rem:Ad}$)$,
then for each choice of $f^i_{P}$ $(i\in \{1,2,3,4\},\,\, P\in C)$, 
\begin{align*}
\Lambda^i_{P}&:=\Big\{(j,Q)\in \{1,2,3,4\}\times C\,;\, 
 \mbox{$f^j_{Q}$ is congruent to $f^i_{P}$}
\Big\}
\end{align*}
is a finite set. In particular, there are 
uncountably many mutually non-congruent isomers of $g$.
\item[(iv)] Suppose $C$ has no symmetries. 
If $ds^2$ does not admit any
effective symmetries
$($in particular, this assumption 
follows if $\kappa_s$ has no symmetries, see Remark \ref{rem:Ad}$)$,
then the set $\Lambda^i_{P}$ is a one-point-set for each
$(i,P)\in \{1,2,3,4\}\times C$.
\end{enumerate}
\end{theorem}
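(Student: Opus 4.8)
The plan is to reduce everything to the normal form \eqref{eq:repF} and to exploit the structural fact (cf.\ \cite{HNSUY} and \cite{SUY}) that, for a normal form $f$ along $C$, the first fundamental form $ds^2_f$ written in the coordinates $(t,v)$ is completely determined by the pair consisting of the singular curvature $\kappa_s(t)=\kappa(t)\cos\theta(t)$ and the extended half-cuspidal curvature $m(t,v)$; conversely, any such pair with $|\kappa_s|<\kappa$ and $m(\cdot,0)\ne 0$ is realized, and the cuspidal angle $\theta$ is then recovered from $\kappa_s=\kappa\cos\theta$ up to one global sign --- the two signs producing a surface and its dual, which share $ds^2$ but, since $\kappa_\nu=\kappa\sin\theta$ is nowhere zero by \eqref{eq:angle}, have non-congruent images. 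After replacing $g$ by its normal form (Proposition \ref{prop:000}) and choosing the arc-length parametrization $\gamma$ of $C$ so that $g(t,0)=\gamma(t+p_*)$, I write $\kappa_s^g$ and $m_g$ for its invariants.

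For (i) I would construct the families directly. For $P=\gamma(p)\in C$, let $f^1_P$ be the normal form along $C$ with $f^1_P(t,0)=\gamma(t+p)$, extended half-cuspidal curvature $m_g(t,v)$, and cuspidal angle $\theta(t)\in(0,\pi)$ defined by $\cos\theta(t)=\kappa_s^g(t)/\kappa(t+p)$; this is well defined and real analytic because $|\kappa_s^g|<\min_{S^1}\kappa\le\kappa(t+p)$, it lies in $\mc F^\omega_*(C)$ since its singular curvature is $\kappa_s^g$ and $\max_{S^1}|\kappa_s^g|<\min_{S^1}\kappa$ (cf.\ \eqref{eq:mM}), and, its extended half-cuspidal curvature being $m_g$, one has $ds^2_{f^1_P}=ds^2_g$, so $f^1_P$ is isometric to $g$; note $f^1_{\gamma(p_*)}=g$, which gives the distinguished pair $(i_0,P_0)=(1,\gamma(p_*))$. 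I let $f^2_P$ be the dual of $f^1_P$, and define $f^3_P,f^4_P$ analogously but with $\kappa_s^g$ replaced by $t\mapsto\kappa_s^g(-t)$ and $m_g$ by $(t,v)\mapsto m_g(-t,v)$, so that $ds^2_{f^3_P}=ds^2_{f^4_P}$ is the pull-back of $ds^2_g$ by $(t,v)\mapsto(-t,v)$ and hence again isometric to $g$. Each family is real analytic in $P$, establishing (i).

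For (ii), let $h$ be an isomer of $g$ with normal form $\hat h$, say $\hat h(t,0)=\gamma(\tau t+q)$ with $\tau\in\{1,-1\}$. Because $\hat h$ is isometric to $g$ and, in a normal form, $t$ is an arc-length parameter and $v$ a normalized half-arc-length parameter of the sectional cusps (as in the proof of Lemma \ref{lem:0}, cf.\ \cite[Appendix A]{HNSUY}), there is a diffeomorphism $\phi(t,v)=(\sigma t+a,\sigma'v)$, $\sigma,\sigma'\in\{1,-1\}$, with $\phi^*ds^2_{\hat h}=ds^2_g$; comparing the (intrinsic) singular curvatures gives $\kappa_s^{\hat h}(\sigma t+a)=\kappa_s^g(t)$, hence $\cos\theta_{\hat h}(u)=\kappa_s^g(\sigma(u-a))/\kappa(\tau u+q)$, with $m_{\hat h}$ the matching reparametrization of $m_g$. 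Applying, if necessary, the reparametrization $t\mapsto -t$ (which reverses $\tau$ and $\sigma$ together) I may assume $\tau=1$; then $\kappa_s^{\hat h}(t)=\kappa_s^g(\pm t+c)$ for a constant $c$, and a further shift of $t$ makes $\hat h$ coincide, up to the right-equivalence $v\mapsto-v$, with $f^i_P$ for the family index $i\in\{1,2,3,4\}$ determined by the sign $\pm$ and by the global sign of $\theta_{\hat h}$, and for a suitable $P$. Since right equivalence amounts to having the same image (Proposition \ref{prop:eq}), $h$ is right equivalent to a member of one of the four families.

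Finally, for (iii) and (iv): by Proposition \ref{prop:eq} and Corollary \ref{cor:X}, $f^j_Q$ is congruent to $f^i_P$ iff $T\circ f^j_Q$ is right equivalent to $f^i_P$ for some isometry $T$ of $\R^3$, and since equality of images carries the cuspidal-edge curve of $f^j_Q$ onto that of $f^i_P$, necessarily $T\in\op{Isom}(C):=\{T\in\op{Isom}(\R^3)\,;\,T(C)=C\}$; this group is compact, and $0$-dimensional (hence finite) when $C$ is not a circle, since an embedded closed curve with positive-dimensional symmetry group is a circle, and it is trivial when $C$ has no symmetry. Next, if $f^i_P$ is right equivalent to $f^j_Q$, then by Lemma \ref{lem:0} the equivalence is given by a map $(t,v)\mapsto(\pm t+c,\pm v)$, and composing it with the identifications of $ds^2_{f^i_P}$ and $ds^2_{f^j_Q}$ with $ds^2_g$ (each being $ds^2_g$ up to at most the reflection $(t,v)\mapsto(-t,v)$) produces a symmetry of $ds^2_g$ of the form $(t,v)\mapsto(\pm t+c',\pm v)$; this is either one of the at most finitely many effective symmetries of $ds^2$ (finite by hypothesis, automatically so when $\kappa_s$ is non-constant, cf.\ Remark \ref{rem:Ad}) or one of the two non-effective maps $(t,v)\mapsto(t,\pm v)$, and $(j,Q)$ is then determined by $(i,P)$, this symmetry, and the choice of $j\in\{1,2,3,4\}$. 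Hence each right-equivalence class meets $\{f^i_P\}_{i,P}$ in a finite set; applying $T^{-1}$ over the finitely many $T\in\op{Isom}(C)$ (and using (ii) to recognize $T^{-1}\circ f^i_P$ as some $f^{i'}_{Q'}$) then shows $\Lambda^i_P$ is finite, and since only finitely many $f^i_P$ are right equivalent to $g$ while $\{f^i_P\}_{i,P}$ is uncountable, there are uncountably many pairwise non-congruent isomers of $g$, proving (iii). Under the hypotheses of (iv) the only admissible symmetry is $(t,v)\mapsto(t,\pm v)$ (the $(-t,v)$-type matches being excluded, as they would force a symmetry of $C$ or an effective symmetry of $ds^2$), and since $f^1_P,f^3_P$ differ from their duals $f^2_P,f^4_P$ by the sign of the nowhere-zero $\kappa_\nu$, one obtains $(j,Q)=(i,P)$, so $\Lambda^i_P$ is a one-point set. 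The hard part will be the sign bookkeeping in these last two steps --- how $\tau,\sigma,\sigma'$, the orientation of $C$, and the reflections $v\mapsto-v$ interact as one passes among $\hat h$, the $f^i_P$, and the symmetries of $ds^2_g$ --- together with the input, cited from \cite{HNSUY} and \cite{SUY}, that $(\kappa_s,m)$ is a complete intrinsic invariant of a normal form.
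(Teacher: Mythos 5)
Your construction of the four families rests on the claim that the first fundamental form of a normal form in the coordinates $(t,v)$ is completely determined by the pair $(\kappa_s,m)$, and that one may therefore simply write down $f^i_P$ via Fukui's formula with the data $\kappa_s^g$, $m_g$ and a cuspidal angle solving $\kappa(\,\cdot+p)\cos\theta=\kappa_s^g$. That claim is false, and it is the crux of the whole construction. Computing $f_t$ from \eqref{eq:repF} with the frame $\mb e_1=\cos\theta\,\mb n+\sin\theta\,\mb b$, $\mb e_2=-\sin\theta\,\mb n+\cos\theta\,\mb b$ gives
$f_t=(1-A\kappa_s+B\kappa_\nu)\gamma'+(A_t-B\mu)\mb e_1+(B_t+A\mu)\mb e_2$ with $\mu:=\tau+\theta'$, so the coefficient $F=\inner{f_t}{f_v}$ involves $\mu$, not only $(\kappa_s,m)$. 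The paper's own helix example makes this concrete: there $m\equiv1$ and $F(v)=v(v-\sin v)/2$, the factor $1/2$ being exactly $\mu=\tau+\theta'$; realizing the same $(\kappa_s,m)$ along a circle of the same curvature (so $\tau=0$, $\theta'=0$) would give $F\equiv0$. Hence two normal forms with identical $(\kappa_s,m)$ can have different first fundamental forms, and your $f^1_P$ need not be isometric to $g$. This is precisely why the existence of isomers is a nontrivial theorem requiring the Cauchy--Kowalevski theorem: given the \emph{metric} $ds^2$ and a curve with $\kappa>|\kappa_s|$, Lemma \ref{lem:ex} (built on the local realization theorem of \cite{HNSUY} plus a gluing/periodicity argument around $S^1$) produces exactly two cuspidal edges $f_\pm$ along that curve with first fundamental form $ds^2$, distinguished by the sign of $\kappa_\nu=\pm\sqrt{\kappa^2-\kappa_s^2}$; the four families arise by applying this to $t\mapsto\gamma(\pm t+a)$, and these realizations are in general \emph{not} normal forms and do not share the $m$ of $g$. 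Your argument for (ii) inherits the same defect, since it again classifies isomers by $(\kappa_s,m)$; the correct route is the uniqueness clause of Lemma \ref{lem:ex}.

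Your treatment of (iii) and (iv) is essentially sound once the families are constructed correctly, and it is genuinely different from the paper's: you reduce to the finiteness of $\{T\in\op{Isom}(\R^3):T(C)=C\}$ for a non-circular knot (a compact zero-dimensional Lie group) combined with the finiteness of effective symmetries of $ds^2$ via Corollary \ref{cor:E}, whereas the paper argues by contradiction, extracting an accumulating sequence $a_n$ from an infinite $\Lambda^i_P$ and using real analyticity to force $\kappa$ and $\tau$ constant, hence $C$ a circle. Both work; yours is more structural, the paper's avoids invoking the classification of curves with infinite symmetry group. Two small cautions: by Corollary \ref{cor:E} the map $(t,v)\mapsto(t,-v)$ is never a symmetry of $ds^2$ for $f\in\mc F^\omega_*(C)$ (non-effective symmetries other than the identity do not exist), so it should not appear in your list of admissible symmetries; and the step ``equal images carry the edge of $f^j_Q$ onto the edge of $f^i_P$, hence $T(C)=C$'' deserves a sentence, since it uses that $C$ is recoverable from the image as its non-manifold locus.
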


\begin{remark}\label{rem:New}
To show the existence of $\check f$
 as an isomer of $f\in \mc F^\omega(C)$, 
the assumption \eqref{eq:angle}
is needed to
apply the Cauchy-Kowalevski theorem (see \cite[Theorem I]{HNSUY})D
(On the other hand, if \eqref{eq:angle} fails, one can find cuspidal edges whose 
isomers do not exist, see \cite[Corollary 4.13]{HNSUY} for details.)
However, to construct infinitely many isomers of 
$f\in \mc F^\omega(C)$,
the condition \eqref{eq:angle} is not sufficient, and we need to assume that
$f$ must  belong to the class $\mc F^\omega_*(C)$,
see \eqref{eq:Ref} in the proof of
Theorem \ref{thm:main} below.
\end{remark}

\begin{remark}\label{rem:Ad}
We may assume that the initially given $g\in \mc F^\omega_*(C)$ is 
a normal form. Suppose that $ds^2$ admits an 
effective symmetry
$\phi$.
(Later, we will show that any symmetry of $ds^2$ is effective, 
see Corollary \ref{cor:E}.)
Then it induces a symmetry of
the singular curvature function $\kappa_s$ of $g$. 
Hence,
\begin{enumerate}
\item[(1)] {\it the conclusion of {\rm (iv)} follows if $\kappa_s$ has no symmetries}.
\end{enumerate}
Moreover,
\begin{enumerate}
\item[(2)]
{\it the conclusion of {\rm (iii)} follows if
$\kappa_s$ is non-constant}.
\end{enumerate}
In fact, if $ds^2$ admits infinitely many 
distinct effective symmetries,
then they give infinitely many 
symmetries of $\kappa_s$.
Since $\kappa_s$ is real analytic, it must be constant.
\end{remark}

To construct $f^i_{P}$ along the knot $C$, 
the real analyticity of $f$
and the condition (1.3)
are required,
because we need to apply the Cauchy-Kowalevski theorem inductively
(cf. Lemma \ref{lem:ex}). 

Since the curvature function of the curve 
$\gamma_m$ ($m\ge 1$) given in
\eqref{eq:cm} is non-constant,
the cuspidal edges $f_m$  
as in Example \ref{eq;k}
satisfy the condition (iii) in Theorem \ref{thm:main}, and
each $f_m$ has infinitely many isomers.
On the other hand, $f_m$ does not satisfy the condition (iv),
since $C$ admits a symmetry.
We can show the existence of an  example satisfying (iv),
as follows:

\begin{example}
We consider a closed convex $C^\infty$-regular curve $C$, 
lying in the $2$-plane 
$
\R^2:=\{(x,y,0)\in \R^3 \,;\, x,y\in \R\}. 
$
We can choose $C$ so that  it has no symmetries as a plane curve.
Considering the approximation of $C$ by Fourier series,
by  Lemma \ref{lem:A0} in the appendix,
we may assume that $C$ is real analytic.
Let $\pi:S^2\setminus\{(0,0,1)\}\to \R^2$ be 
the stereographic projection. 
We let $\gamma(t)$ $(0\le t\le l)$ be
the arc-length parametrization of $C$ and
set
$$
\tilde \gamma_u(t):=\frac{\pi^{-1}(u\, \gamma(t))+(0,0,1)}{2u},
$$
which is a real analytic $1$-parameter deformation of
$\tilde \gamma_0:=\gamma$ for sufficiently small $|u|$.
We denote by $\tilde C_u$ the image of $\tilde \gamma_u(t)$.
Then $\tilde C_0=C$. 
Since the length $L(u)$ of $\tilde C_u$ depends 
real analytically on $u$, it is 
a  real analytic function of $u$.
So we set
$$
\gamma_u(t):=\frac{l}{L(u)}\tilde \gamma_u(t),
$$
which gives a $1$-parameter family of $C^\omega$-embedded space 
curves of period $l$ satisfying $\gamma_0=\gamma$.
Since $C$ has no symmetries,
the curvature function $\kappa_u$ 
of $\gamma_u$ also has no symmetries
for sufficiently small $|u|$ (cf. Lemma \ref{lem:A0}).
Since $\gamma_u$ ($u\ne 0$) lies on a sphere,
and its curvature function as a space curve is not
constant, it cannot be a part of any circle,
and so its torsion function never vanishes identically.
In particular, it does not lie in any plane.
As a consequence, 
the image of $\gamma_u$ ($u\ne0$) has no symmetries. 
We fix $u$, and parametrize $\gamma_u$ by
arc-length. Then by
Fukui's formula (cf. \eqref{eq:repF}),
we can construct a $C^\omega$-cuspidal edge $f_u$
with constant cuspidal angle $\theta_0\in (0,\pi/2)$
and with $(A(t,v),B(t,v)):=(t^2,t^3)$.
Since the singular curvature function $\kappa_u\cos \theta_0$
of $f_u$ along the $t$-axis
has no symmetries, part (1) of Remark \ref{rem:Ad}
implies that
$f_u$ satisfies (iv) of Theorem \ref{thm:main}.
\end{example}

\section{Proof of Theorem \ref{thm:main}}\label{sec2}

A positive semi-definite $C^\omega$-metric 
$
ds^2=Edt^2+2Fdtdv+Gdv^2
$
defined on $U_\epsilon(S^1)$ is called
a {\it periodic Kossowski metric}  (cf. \cite{HNUY} or \cite{HNSUY})
if it satisfies the following:
\begin{enumerate}
\item[(a)] 
$F(t,0)=G(t,0)=0$,
$E_v(t,0)=2F_t(t,0)$ and $G_t(t,0)=G_v(t,0)=0$, 
\item[(b)] there exists a $C^\omega$-function $\lambda$ defined on
$U_\epsilon(S^1)$
satisfying 
$
EG-F^2=\lambda^2,
$
$\lambda(t,0)=0$ and $\lambda_v(t,0)\ne 0$ for each $t\in S^1$.
\end{enumerate} 
Under the assumption that $\lambda_v(t,0)>0$, the singular curvature 
$\kappa_s$ of $ds^2$
is defined by (cf. \cite[Remark 3.5]{HNSUY})
\begin{equation}\label{eq:2}
\kappa_s(t):=\frac{-F_v(t,0)E_t(t,0)+2E(t,0)
F_{tv}(t,0)-E(t,0)E_{vv}(t,0)}{2E^{3/2}(t,0)\lambda_v(t,0)}
\end{equation}
for each  $t\in S^1$.
If  $ds^2$ is
the first fundamental form of $f\in \mc F^\omega(C)$, 
it is a periodic Kossowski metric
(cf. \cite[Lemma 2.9]{HNSUY}),
and the singular curvature of $f$
has the above expression.

\medskip
\begin{lemma}\label{lem:ex}
Let $\gamma(t)$ $(t\in S^1)$ be a closed $C^\omega$-curve 
embedded in $\R^3$ parametrized by arc-length
whose curvature function $\kappa(t)$ is positive everywhere.
Let $ds^2$ be a periodic Kossowski metric on $U_\epsilon(S^1)$
satisfying
$$
E(t,0)=1,\quad F(t,0)=G(t,0)=0\qquad (t\in S^1).
$$
Suppose that the 
singular curvature $\kappa_s$ along the 
singular curve $S^1\ni t\mapsto (t,0)\in U_\epsilon(S^1)$ satisfies
\begin{equation}\label{eq:mM0}
|\kappa_s(t)|< \kappa(t) \qquad (t\in S^1).
\end{equation}
Then there exists a cuspidal edge $f_+$ 
$($resp. $f_-)$ along $C:=\gamma(S^1)$ 
satisfying
\begin{enumerate}
\item $f_+$ $($resp. $f_-)$ is defined on $U_\delta(S^1)$
for some $\delta\in (0,\epsilon)$,
\item  $f_+(t,0)=\gamma(t)$ $($resp. 
$f_-(t,0)=\gamma(t))$ for each $t\in S^1$,
\item the first fundamental form of $f_+$ $($resp. $f_-)$  is $ds^2$, 
\item 
the limiting normal curvature 
 of $f_+$ $($resp. $f_-)$ is positive-valued $($resp. negative-valued$)$
and is equal to
$$
\sqrt{\kappa(t)^2-\kappa_s(t)^2},\qquad
(\text{resp. }\, -\sqrt{\kappa(t)^2-\kappa_s(t)^2}), 
$$
and
\item $f_+$ and $f_-$ belong to $\mc F^\omega_*(C)$.
\end{enumerate}
Moreover, if there exists a cuspidal edge $g$
defined on an open subset $V(\subset U_\epsilon(S^1))$ 
containing $S^1\times \{0\}$ such that
$g(t,0)=\gamma(t)$ holds for $t\in S^1$
and the first fundamental form of $g$
is $ds^2$, 
then $g$ is right equivalent to $f_+$ or $f_-$.
\end{lemma}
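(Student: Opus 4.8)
The plan is to reduce the existence statement to an application of the Cauchy--Kowalevski theorem in the spirit of \cite[Theorem I]{HNSUY}, but carried out equivariantly with respect to the $S^1$-parameter so that the resulting cuspidal edge is genuinely periodic in $t$. First I would write the unknown cuspidal edge in Fukui's normal form \eqref{eq:repF}, so that it is determined by the pair of functions $(A(t,v),B(t,v))$ subject to the normalization (d), (e) and by the cuspidal angle $\theta(t)$. Since $t$ is to be arc-length for $\gamma$ and $v$ the normalized half-arc-length parameter of the sectional cusps, the metric $ds^2$ prescribes exactly the coefficients $E,F,G$; combining \eqref{eq:2} with \eqref{eq:kskn} one reads off $\cos\theta(t)=\kappa_s(t)/\kappa(t)$, and here is where hypothesis \eqref{eq:mM0} enters decisively: it guarantees $|\kappa_s(t)|<\kappa(t)$ for all $t$, hence $\theta(t)\in(0,\pi)$ is a well-defined real-analytic function on all of $S^1$, with the two sign choices $\sin\theta=\pm\sqrt{\kappa^2-\kappa_s^2}$ corresponding to $f_+$ and $f_-$. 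This makes statement (4) automatic once existence is shown, and gives the dichotomy $\kappa_\nu>0$ versus $\kappa_\nu<0$.

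The core step is then to produce a real-analytic embedding $f:U_\delta(S^1)\to\R^3$ realizing this $ds^2$ with the prescribed $\theta(t)$ along $v=0$. I would set up the realization problem as a Cauchy problem for the fundamental equations of surface theory (Gauss--Codazzi, or equivalently a first-order analytic PDE system for $f$ and its frame) with initial data on $v=0$ given by $\gamma(t)$, its Frenet frame, and the value $\theta(t)$ determining the tangent plane's degeneration. Because all data are $C^\omega$ and $l$-periodic in $t$, the Cauchy--Kowalevski theorem yields a unique $C^\omega$ solution on some $U_\delta(S^1)$; the periodicity of the solution follows from uniqueness, since a translate by $l$ in $t$ solves the same Cauchy problem. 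Conditions (1)--(3) are then immediate, and (c) of the normal form — i.e.\ that $(t,0)$ is an honest cuspidal edge, not a worse singularity — follows from $\lambda_v(t,0)\ne0$ together with the nondegeneracy built into the half-arc-length normalization, exactly as in \cite{HNSUY}. For (5), $f_\pm\in\mc F^\omega_*(C)$, I would note that the singular curvature of $f_\pm$ is the prescribed $\kappa_s$ and its ambient curvature along $C$ is the given $\kappa$, so \eqref{eq:mM0} is precisely the defining inequality \eqref{eq:mM}.

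For the uniqueness clause — any cuspidal edge $g$ with $g(t,0)=\gamma(t)$ and first fundamental form $ds^2$ is right equivalent to $f_+$ or $f_-$ — I would argue as follows. By Proposition~\ref{prop:000}, replace $g$ by its normal form $\hat g$ with the same base point and orientation; this does not change the right-equivalence class and does not change that the induced metric is $ds^2$. Now $\hat g$ realizes the same Cauchy data as $f_+$ or $f_-$ up to the sign of $\sin\theta$ (equivalently up to $v\mapsto -v$, which reflects the surface across its tangent-cone and flips $\kappa_\nu$), so the uniqueness part of Cauchy--Kowalevski forces $\hat g(t,v)=f_\pm(t,v)$ after possibly replacing $v$ by $-v$; by Lemma~\ref{lem:0} this is exactly the assertion that $\hat g$, hence $g$, is right equivalent to $f_+$ or $f_-$.

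The main obstacle I anticipate is the passage from "local $C^\omega$ solution of the realization PDE" to "globally $l$-periodic solution on a uniform collar $U_\delta(S^1)$": the Cauchy--Kowalevski theorem is local in $t$, so one must either invoke periodicity-plus-uniqueness to glue, or run the argument on the universal cover and descend — and one must check the width $\delta$ of the collar can be taken uniform in $t$, which is where compactness of $S^1$ and the strict inequality in \eqref{eq:mM0} (ensuring $\theta$ stays in a compact subinterval of $(0,\pi)$, so the coefficients of the PDE are uniformly analytic) are both used. A secondary point requiring care is verifying that the normalizations (d), (e) are actually compatible with the prescribed $ds^2$ — i.e.\ that a periodic Kossowski metric with $E(t,0)=1$, $F(t,0)=G(t,0)=0$ does admit the half-arc-length normal form — but this is established in \cite[Appendix A]{HNSUY} and I would simply cite it.
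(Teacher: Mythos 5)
Your proposal is correct and follows essentially the same route as the paper: the paper likewise reduces existence to the Cauchy--Kowalevski-based local realization theorem \cite[Theorem 3.9]{HNSUY}, glues the local solutions over a finite cover of $S^1$ by uniqueness, and rules out a sign-flip of the limiting normal curvature when closing up the loop by exactly your observation that \eqref{eq:mM0} keeps the cuspidal angle away from $0$ and $\pi$. The uniqueness clause is likewise obtained from the uniqueness assertion of that theorem.
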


\begin{proof}
We can find a partition
$
0=t_0<t_1<\cdots<t_n=l
$
of $S^1=\R/l\Z$
such that 
there exist local coordinate systems $(U_i;x_i,y_i)$
($i=1,\dots,n$)
of $U_\epsilon(S^1)$ containing 
$[t_{i-1},t_i]\times \{0\}$,
where $n$  is a certain positive integer.
For each $i\in \{1,...,n\}$, the metric has the expression
$
ds^2=E_i(dx_i)^2+G_i(dy_i)^2
$
on $U_i$.
Since $ds^2$ satisfies \eqref{eq:mM0},
we can apply \cite[Theorem 3.9]{HNSUY}
by setting $U:=U_i$ for each $i$, 
and obtain a map
$g_{+,i}:U_i\to \R^3$ 
(resp.  $g_{-,i}:U_i\to \R^3$)
($i=1,\dots,n$)
satisfying (2) and (3) on $U_i$,
and the  limiting normal curvature 
of $g_{\pm,i}$ is equal to
$$
\sqrt{\kappa(t)^2-\kappa_s(t)^2},\qquad
(\text{resp.}\, -\sqrt{\kappa(t)^2-\kappa_s(t)^2}\,\,).
$$
In other words,
the cuspidal angles $\theta_{\pm,i}$ of $g_{\pm,i}$ satisfy
(cf. \eqref{eq:kskn} and also \cite[(3.6)]{HNSUY})
\begin{equation}\label{eq:kn}
\kappa(t)\sin \theta_{+,i}(t)=\sqrt{\kappa(t)^2-\kappa_s(t)^2},
\qquad \theta_{-,i}(t)=-\theta_{+,i}(t).
\end{equation}
Since  conditions (2) and  (3) do not depend on coordinates,
the uniqueness of such a pair of maps yields
that $g_{\pm,i}=g_{\pm,i-1}$ holds on $U_i\cap U_{i-1}$.
So we obtain a map $f_+$ 
(resp. $f_-$) defined on $U_\delta([0,l])$
for a certain $\delta\in (0,\epsilon)$ 
such that each $g_{+,i}$ (resp. $g_{-,i}$)
coincides with $f_+$ (resp. $f_-$)
on $U_\delta([0,l])\cap U_i$ for $i=1,\dots,n$.
In particular, the cuspidal angle functions 
$\theta_{\pm}(t)$ of $f_\pm$ satisfy (cf. \eqref{eq:kn} and \eqref{eq:angle})
$$
\theta_{-}(t)=-\theta_{+}(t),\quad \kappa(t)\cos \theta_{+}(t)=\kappa_s(t),
\quad
\kappa(t)\sin \theta_{+}(t)=\sqrt{\kappa(t)^2-\kappa_s(t)^2}
$$ 
for $t\in S^1$. Then, we obtain
$f_{+}=g_{+,1}$ and $f_{-}=g_{-,1}$
on $U_n\cap U_1$.
In fact, if not, we have $g_{\pm,n}=g_{\mp,1}$ and the function 
$\theta_+(t)$ takes different signs at $t=0$ and $t=l$.
Then by the continuity of $\theta_+(t)$, it has a zero on $[0,l)$, 
a contradiction. So $f_\pm$ are $l$-periodic.
Moreover, $f_+$ and $f_-$ belong to $\mc F^\omega_*(C)$,
because of \eqref{eq:mM}.
The last statement of 
Lemma \ref{lem:ex} follows from the last assertion 
of \cite[Theorem 3.9]{HNSUY}.
\end{proof}

As an application of this lemma, we can prove the following
important conclusion:

\begin{corollary}\label{cor:E}
Let $f\in \mc F^\omega_*(C)$. If the first fundamental form $ds^2_f$ of $f$
has a symmetry $\phi$, then $\phi$ is effective.
\end{corollary}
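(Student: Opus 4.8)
The strategy is to rule out \emph{non-effective} symmetries: if $\phi$ is a symmetry of $ds^2_f$ with $\phi(t,0)=(t,0)$ for all $t\in S^1$, I want to show $\phi$ is the identity map, which contradicts the definition of a symmetry and hence proves that every symmetry of $ds^2_f$ is effective. First I would reduce to the case that $f$ is a normal form: by Proposition~\ref{prop:000} one may replace $f$ by its normal form and conjugate $\phi$ by the corresponding right-equivalence; since that right-equivalence preserves $S^1\times\{0\}$ as a set, the conjugated map still fixes $S^1\times\{0\}$ pointwise and is the identity if and only if $\phi$ is. So assume $f$ is a normal form and put $g:=f\circ\phi\in\mc F^\omega(C)$. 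Then $g(t,0)=\gamma(t)$ and $ds^2_g=\phi^*ds^2_f=ds^2_f=:ds^2$, so $ds^2$ is a periodic Kossowski metric with $E(t,0)=1$, $F(t,0)=G(t,0)=0$ whose singular curvature $\kappa_s$ satisfies $|\kappa_s|<\kappa$ on $S^1$ by \eqref{eq:mM}.

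Next I would check that $g$ and $f$ share the same limiting normal curvature along $C$. Writing $\phi=(\phi^1,\phi^2)$, the condition $\phi(t,0)=(t,0)$ gives $\phi^1(t,0)=t$, $\phi^2(t,0)=0$, and from $g_v=(f_t\circ\phi)\phi^1_v+(f_v\circ\phi)\phi^2_v$ together with $f_v(t,0)=0$ (immediate from Definition~\ref{def:angle}) one gets $G_g(t,0)=\phi^1_v(t,0)^2$; since $ds^2_g=ds^2_f$ this vanishes, so $\phi^1_v(t,0)=0$. Differentiating once more, and using that $f_{vv}(t,0)$ is a unit vector orthogonal to $\gamma'(t)$ (again from Definition~\ref{def:angle}), I obtain $g_{vv}(t,0)=\phi^1_{vv}(t,0)\,\gamma'(t)+\phi^2_v(t,0)^2\,f_{vv}(t,0)$ with $\phi^2_v(t,0)\ne0$. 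Hence $g$ has null direction $\partial_v$ along $C$, the same limiting tangent plane as $f$, and—because the reparametrization's Jacobian enters as $\phi^2_v(t,0)^2>0$ in $\gamma'(t)\times g_{vv}(t,0)$—the same co-orientation along $C$. Therefore $\kappa_\nu^g=\kappa_\nu^f$ (alternatively, $g$ and $f$ have literally the same image, so $\kappa_\nu^g=\pm\kappa_\nu^f$ on the connected curve $C$, and the computation just fixes the sign).

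Finally I would invoke uniqueness. Both $f$ and $g$ are cuspidal edges on a neighborhood of $S^1\times\{0\}$ with the same first fundamental form $ds^2$, the same restriction $\gamma$ to the singular curve, and the same limiting normal curvature; since $|\kappa_s|<\kappa$, the uniqueness part of \cite[Theorem~3.9]{HNSUY}—applied chart-by-chart over a finite cover of $S^1$, exactly as in the proof of Lemma~\ref{lem:ex}—yields $f=g$ on a neighborhood of $S^1\times\{0\}$. Thus $f\circ\phi=f$, and since $f$ restricts to the embedding $\gamma$ on $S^1\times\{0\}$ it is injective on a sufficiently small tubular neighborhood, so $\phi$ must be the identity there, the desired contradiction.

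The main obstacle is precisely the passage from Lemma~\ref{lem:ex} (which only gives right-equivalence to one of $f_+,f_-$) to the genuine equality $f=g$: this needs the coordinate-free uniqueness underlying \cite[Theorem~3.9]{HNSUY} together with the verification that $f$ and $g$ carry the \emph{same} oriented limiting normal curvature. The sign bookkeeping in that verification is the delicate point—one must see that the reparametrization contributes its Jacobian squared, $\phi^2_v(t,0)^2>0$, so that the co-orientations of $f$ and $g=f\circ\phi$ genuinely coincide rather than possibly being opposite.
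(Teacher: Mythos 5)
Your proposal is correct and follows essentially the same route as the paper: set $g:=f\circ\phi$ for a putative non-effective symmetry $\phi$, observe that $g$ and $f$ share the first fundamental form and the restriction $\gamma$ to the singular curve, use the uniqueness behind Lemma~\ref{lem:ex} (i.e.\ \cite[Theorem~3.9]{HNSUY}) to force $g\in\{f_+,f_-\}$ and then $g=f$ after matching the sign of the cuspidal angle, and conclude $\phi=\mathrm{id}$ from injectivity of $f$. The only difference is cosmetic: where the paper matches the signs by noting that $g$ and $f$ have the same image, you verify it by the chain-rule computation $g_{vv}(t,0)=\phi^1_{vv}(t,0)\gamma'(t)+\phi^2_v(t,0)^2 f_{vv}(t,0)$, which is a correct (and somewhat more explicit) justification of the same step.
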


\begin{proof}
Without loss of generality, we may assume that $f(t,v)$ is
a normal form.
Suppose that $\phi$ is a non-effective symmetry 
of $ds^2_f$. Then $\phi(t,0)=(t,0)$ holds.
If we set $g=f\circ \phi$, then $f(t,0)=g(t,0)$ and 
$g$ has the same first fundamental form as $f$.
Applying Lemma \ref{lem:ex}
 to $ds^2_f$, we obtain two cuspidal edges
$f_\pm$ whose first fundamental forms are $ds^2$
such that $f_\pm(t,0)=f(t,0)$.
Then the last assertion of Lemma \ref{lem:ex}
yields that
either $f=f_+$ or $f=f_-$ holds.
Moreover, applying
the last assertion of Lemma \ref{lem:ex}
again, we can conclude that
$g$ coincides with $f_+$ or $f_-$.
Since $g$ has the same image as $f$,
the cuspidal angle of $g$ at each point of $C$
coincides with that of $f$.
Thus, $g$ must coincide with $f$, which implies
$f\circ \phi=f$.
Since $f$ is a cuspidal edge, it is an injective map.
So $\phi$ must be the identity map.
\end{proof}

\smallskip
\noindent
{\it Proof of  Theorem \ref{thm:main}.}
Without loss of generality, we may assume that 
$g(t,v)$ itself is expressed in
a normal form \and that $\gamma(t)$ is parametrized by arc-length.
Replacing $\gamma(t)$ by $\gamma(\sigma t+b)$
for suitable $\sigma\in \{1,-1\}$ and $b\in S^1$,
we may assume that 
\begin{equation}\label{eq:g-new}
g(t,0)=\gamma(t)\qquad (t\in S^1).
\end{equation}
We denote by $ds^2$ the first fundamental form of $g$.
We now construct 
the four families $f^i_{\gamma(a)}$ 
for each $a\in [0,l)$.
Since $f$ satisfies \eqref{eq:mM},
it holds that
\begin{equation}\label{eq:Ref}
\kappa_s(t)\le 
\max_{u\in S^1}|\kappa_s(u)| < \min_{u\in S^1}\kappa(u)\le \kappa(\sigma t+a)
\qquad (\sigma\in \{1,-1\})
\end{equation}
for $t\in S^1$. In particular, we can apply Lemma \ref{lem:ex} for 
the closed $C^\omega$-curves
$$
t\mapsto \gamma(t+a) \,\, \text{ and }\,\,
t\mapsto \gamma(-t+a) \qquad (a\in [0,l)).
$$
Thus, we obtain four isomers $f^i_{\gamma(a)}$ associated with $g$ 
($i=1,2,3,4$) such that
\begin{itemize}
\item[(1)] each $f^i_{\gamma(a)}$ belongs to 
$\mc F^\omega_*(C)$
whose first fundamental form is $ds^2$,
\item[(2)] $f^j_{\gamma(a)}(t,0)=\gamma(t+a)$ 
($t\in S^1$) holds for $j=1,2$, and
$f^k_{\gamma(a)}(t,0)=\gamma(-t+a)$ 
($t\in S^1$) holds for $k=3,4$, and
\item[(3)]
the limiting normal curvature 
 of $f^i_{\gamma(a)}$ ($i=1,2,3,4$)
is equal to
\begin{equation}\label{eq:kg00}
\sigma'_i\sqrt{\kappa(\sigma_i t+a)^2-\kappa_s(t)^2},
\end{equation}
where $\kappa_s$ is the singular curvature of $g$ along $\gamma$ and
\begin{equation}\label{eq:sigma}
\sigma_i:=
\begin{cases}
1 & \text{if $i=1,2$},\\
-1 & \text{if $i=3,4$}, 
\end{cases}
\qquad
\sigma'_i:=
\begin{cases}
1 & \text{if $i=1,3$}, \\
-1 & \text{if $i=2,4$}.
\end{cases}
\end{equation}
\end{itemize}

\begin{remark}
In the above construction,
each $f^i_{\gamma(a)}$
might not be expressed as a normal form.
We give here such an example.
We consider the helix (cf. \cite[Example 5.4]{HNSUY})
$$
\gamma(t):=
\left(\cos \left(\frac{t}{\sqrt{2}}\right),
\sin \left(\frac{t}{\sqrt{2}}\right),
\frac{t}{\sqrt{2}}\right)
$$
parametrized by arc-length
defined on a certain bounded closed interval
containing $t=0$.
We set
$$
(A(v),B(v)):=\int_0^v w(\cos w,\sin w)dw=(v \sin v+\cos v-1,\sin v-v \cos v),
$$
and fix a constant  $\theta\in (0,\pi/2)$.
Then the map $f(t,v)$ induced by Fukui's formula 
\eqref{eq:repF}
gives a normal form of a cuspidal edge along the
helix. As shown in \cite[Example 5.4]{HNSUY},
we can express the first fundamental form
$ds^2_f$ of $f$ in the following form
$$
ds^2_f=E(v) dt^2+2F(v)dt dv+G(v)dv^2,
$$
where 
$F(v)=v (v -\sin v)/2,\,\,G(v)=v^2$,
and $E(v)$ is
 a certain positive
valued $C^\omega$-function of $t$ which depends
on $\theta$ (one can compute $E(v)$ explicitly using the formula given in
\cite[Proposition 4.8]{HNSUY}).

Also, as shown in \cite[Example 5.4]{HNSUY},
an isomer (the dual) $\check f$ of $f$ can be written as
(the figures of the images of $f$ and $\check f$ are indicated 
in Figure 1, right)
$$
\check f=T\circ f \circ \phi,
$$
where $T$ is the $180^\circ$-rotation with
respect to the principal normal line at $\gamma(0)$
of the helix $\gamma$, and
$\phi$ is an effective symmetry of $ds^2_f$
fixing $(0,0)$. 
(The inverse and the inverse dual of $f$ are given by
 $f_*=T\circ f$ and 
$\check f_*=T\circ \check f$, respectively.)
Such a symmetry $\phi$ must be
uniquely determined by \cite[Proposition 3.15]{HNSUY}.
If $\check f(t,v)$  is also a normal form,
then the map $\phi$ must have
the expression
$
\phi(t,v)=(-t,v).
$
However, this contradicts the fact
that 
$
\phi(x,y)=(-x,y)
$
holds for the local coordinate system at $(0,0)$
given by (cf. \cite[(5.2)]{HNSUY})
$$
x(t,v):=t+\int_0^v \frac{w (w -\sin w)\,}{2E(w)}dw,\quad
y(t,v):=\int_0^v \sqrt{\frac{4E(w)-(w -\sin w)^2}{4E(w)}}\, dw.
$$
\end{remark}

We return to the proof of Theorem \ref{thm:main} and prove (i).
By (1),  each $f^i_{\gamma(a)}$ ($i=1,2,3,4,\,\, a\in S^1$) 
has the same first fundamental form as $g$.
In particular, the singular curvature function of $f^i_{\gamma(a)}$
coincides with that of $g$. Thus, $f^i_{\gamma(a)}$ satisfies 
\eqref{eq:mM} and  belongs to $\mc F^\omega_*(C)$.
By \eqref{eq:g-new} and (3),  we have
$g=f^{1}_{\gamma(0)}$ (resp. $g=f^{2}_{\gamma(0)}$)
if the cuspidal angle of $g$ is positive (resp. negative),
proving all assertions in (i).

We next prove (ii):
Suppose that $h$ is an isomer of $g$.  
Then, there exists a local diffeomorphism $\phi$
such that $\phi^*ds^2_h=ds^2$, where $ds^2_h$ is
the first fundamental form of $h$.
Then $h\circ \phi$ has the same first fundamental form
as $g$.
Since $t\mapsto h\circ \phi(t,0)$ gives an
arc-length parametrization of $C$, 
we can write
$$
h\circ \phi(t,0)=\gamma(\sigma_1 t+b)
$$
for some $\sigma_1\in \{1,-1\}$ and $b\in S^1$.
By the last statement of Lemma \ref{lem:ex},
$h\circ \phi$ coincides with $f_{\gamma(b)}^j$
for some $j\in \{1,2,3,4\}$, proving (ii).

We then prove (iii):
Let 
$
f_n:=f^{j_n}_{\gamma(a_n)}
$
($n=1,2,...$)
be mutually distinct isomers of $g$ 
which are congruent to each other,
where $j_n\in \{1,2,3,4\}$
and $a_n \in [0,l)$.
Replacing $\{f_n\}$ by a 
suitable subsequence  if necessary, 
we may assume that the sequence $\{a_n\}$ consists of distinct values.
By Corollary~\ref{cor:X},
there exist an isometry $T_n$ of $\R^3$ 
and a local diffeomorphism $\phi_n$
such that
\begin{equation}\label{eq:fn}
f_n=T_n\circ f_1\circ \phi_n
\end{equation} 
holds (i.e. $T_1$ and $\phi_1$ are identity maps), 
which implies $\phi^*_n ds^2=ds^2$.
By Corollary~\ref{cor:E},
$\phi_n$ is effective unless it is the identity map.
Since $ds^2$ admits only finitely many effective symmetries,
we may assume that $\phi:=\phi_n$ does not depend on $n$.
Then we have
$$
T_n^{-1}\circ f_n=f_1\circ \phi=T_2^{-1}\circ f_2
$$
for $n\ge 3$.
Substituting $v=0$ 
and using the fact that $f_n=f^{_{j_n}}_{\gamma(a_{n})}$,
we have
$$
\gamma(\sigma_{j_n}^{} t+a_{n})=f_n(t,0)=T_n\circ T_2^{-1}
\circ f_2(t,0)=
T_n\circ T_2^{-1}\circ \gamma(\sigma_{j_2}^{} t+a_{2}),
$$
where $\sigma_{k}$ ($k=1,2,3,...$) are defined
in \eqref{eq:sigma}.
In  particular,
$$
\kappa(\sigma_{j_n}^{} t+a_{n})=\kappa(\sigma_{j_2}^{} t+a_{2}),\qquad
\sigma''_{n}\tau(\sigma_{j_n}^{} t+a_{n}^{})=\tau(\sigma_{j_2}^{} t+a_2)
$$
hold, where (\lq\lq$\det$" denotes the determinant of square matrices)
$$
\sigma''_n:=\det(T_n\circ T_2^{-1}) \in \{1,-1\}
$$
and $\tau(t)$ is the torsion function of $\gamma(t)$.
Substituting $t=0$,  we have
$$
\kappa(a_{n})=\kappa(a_{2}),\qquad
\sigma''_n\tau(a_{n})=\tau(a_{2}).
$$
Since the sequence $\{a_{n}\}$ takes distinct values,
this accumulates to a value $a_\infty\in S^1$.
Since $\kappa(t)$ and $\tau(t)$ are real analytic functions,
they must be constant.
Since $C$ is a knot, it must be a circle lying in a plane,
a contradiction.

Finally, we show (iv):
We fix
$f_0(=f^i_{\gamma(a)})$ ($i\in \{1,2,3,4\}$),
where $a\in [0,l)$. 
Suppose that 
$
f_1=f^{j}_{\gamma(b)}
$
($(j,b)\ne (i,a)$)
is congruent to $f_0$,
where
$j\in \{1,2,3,4\}$
and $b \in [0,l)$.
By Corollary~\ref{cor:X}, there exist an isometry $T$ of $\R^3$
and a local diffeomorphism $\phi$
such that
$ 
f_1(t,v)=T\circ f_0\circ \phi.
$
Since $C$ has no symmetries, $T$ must be the identity map.
Moreover, since $f_1$ and $f_0$ have the same first fundamental
form, $\phi^*ds^2=ds^2$ holds.
Since $ds^2$ does not admit any 
effective symmetries, 
Corollary~\ref{cor:E} yields that
$\phi$ is the identity map.
Hence $f_1=f_0$, and $\Lambda^i_{\gamma(a)}$ ($a\in [0,l)$)
is a one-point set.
\qed

\begin{remark}\label{rmk:4}
When $J=I$, that is, $C$ is non-closed,
the authors showed in \cite{HNSUY} that for each 
germ of a cuspidal edge $f$ satisfying
\begin{equation}\label{eq:mM2}
\max_{t\in I}|\kappa_s(t)| < \min_{t\in I}\kappa(t),
\end{equation}
there exist three isomers $\check f,f_*$ and $\check f_*$ of $f$
(see Remark \ref{rem:I}).
If we set $I_a:=[a-\delta, a+\delta]$
for sufficiently small $\delta>0$,
the restrictions $(f^j_{\gamma(a)})\Big |_{I_a}$ ($j=2,3,4$) of
$f^j_{\gamma(a)}$ (constructed in the above proof)
to the interval $I_a(\subset S^1)$
coincide with these three isomers (cf. \eqref{eq:sigma}).
\end{remark}

\appendix

\section{A property of non-symmetric functions}

We prove the following assertion:

\begin{lemma}\label{lem:A0}
{\it Let $\{\mu_s(t)\}_{s\in [0,l)}$ 
be a continuous one-parameter family of
$C^\infty$-functions on $\R$
satisfying $\mu_s(t+l)=\mu_s(t)$ for each $s$.
If $\mu_0$ has  no symmetries $($cf. Definition \ref{def:curveS}$)$, 
then $\mu_s$ also has no symmetries
for sufficiently small  $s(>0)$.}
\end{lemma}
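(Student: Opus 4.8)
The plan is to argue by contradiction. Suppose that, contrary to the claim, there is a sequence $s_n\downarrow 0$ with $s_n>0$ such that every $\mu_{s_n}$ admits a symmetry. Since $\{\mu_s\}$ is a continuous one-parameter family of functions on the compact circle $S^1=\R/l\Z$, we have $\mu_{s_n}\to\mu_0$ uniformly on $S^1$ (this is the only place the smoothness hypothesis enters: $C^1$, or even merely $C^0$ dependence on $s$, would already suffice). A symmetry of $\mu_{s_n}$ is encoded by a sign $\sigma_n\in\{+1,-1\}$ and a parameter $a_n$: either $\mu_{s_n}(t)=\mu_{s_n}(t+a_n)$ with $a_n\in(0,l)$ (the translation type, $\sigma_n=+1$), or $\mu_{s_n}(t)=\mu_{s_n}(a_n-t)$ with $a_n\in[0,l)$ (the reflection type, $\sigma_n=-1$). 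Passing to a subsequence we may assume $\sigma_n\equiv\sigma$ is constant and that $a_n$ converges, in the circle $\R/l\Z$, to some $a_\infty$.

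The reflection case and the \emph{non-degenerate} translation case are handled by simply passing to the limit. If $\sigma=-1$, then letting $n\to\infty$ in $\mu_{s_n}(t)=\mu_{s_n}(a_n-t)$ and using the uniform convergence $\mu_{s_n}\to\mu_0$ together with the uniform continuity of $\mu_0$ on both sides, we obtain $\mu_0(t)=\mu_0(a_\infty-t)$ for all $t$, i.e.\ $\mu_0$ has a reflection symmetry, contradicting the hypothesis. Likewise, if $\sigma=+1$ and $a_\infty$ is \emph{not} the zero element of $\R/l\Z$, the same limiting argument gives $\mu_0(t)=\mu_0(t+a_\infty)$ with $a_\infty\in(0,l)$, again a contradiction.

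The remaining, and genuinely delicate, case is $\sigma=+1$ with $a_n\to 0$ in $\R/l\Z$; here the naive limit is vacuous. Up to replacing $a_n$ by $l-a_n$ (using $l$-periodicity of $\mu_{s_n}$), we may assume $\mu_{s_n}$ has an actual period $\delta_n\in(0,l)$ with $\delta_n\to 0^+$. I will then show that $\mu_0$ must be constant, which is still a contradiction since constant functions have symmetries. To see this, fix $\varepsilon>0$, pick $\eta>0$ with $|x-y|<\eta\Rightarrow|\mu_0(x)-\mu_0(y)|<\varepsilon/3$ by uniform continuity of $\mu_0$, and choose $N$ so that for $n\ge N$ one has $\|\mu_{s_n}-\mu_0\|_\infty<\varepsilon/3$ and $\delta_n<\eta$. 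Then for any $x,y\in\R$ and $n\ge N$, writing $y-x=k\delta_n+r$ with $k\in\Z$ and $0\le r<\delta_n<\eta$, periodicity gives $\mu_{s_n}(y)=\mu_{s_n}(y-k\delta_n)$, whence $|\mu_{s_n}(x)-\mu_{s_n}(y)|\le|\mu_{s_n}(x)-\mu_0(x)|+|\mu_0(x)-\mu_0(y-k\delta_n)|+|\mu_0(y-k\delta_n)-\mu_{s_n}(y-k\delta_n)|<\varepsilon$, since $|(y-k\delta_n)-x|=|r|<\eta$. Letting $n\to\infty$ yields $|\mu_0(x)-\mu_0(y)|\le\varepsilon$ for all $x,y$, and since $\varepsilon>0$ was arbitrary, $\mu_0$ is constant.

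This exhausts all cases, so the assumption that infinitely many $\mu_{s_n}$ have symmetries is untenable, and hence $\mu_s$ has no symmetry for all sufficiently small $s>0$. The one step I expect to require care is precisely the degenerate translation case just discussed --- ruling out ``symmetries escaping to the identity'' as $s\to 0$ --- and the $\varepsilon/3$ estimate above, which uses only uniform convergence and uniform continuity of the limit (no equicontinuity of the family is needed), is how I would dispatch it.
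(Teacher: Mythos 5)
Your proof is correct, and while the overall skeleton (argue by contradiction, extract a subsequence with a fixed sign $\sigma_n\equiv\sigma$ and convergent shifts $a_n\to a_\infty$, pass to the limit to kill the reflection case and the translation case with $a_\infty\neq 0$) matches the paper exactly, you handle the one genuinely delicate case --- translation symmetries degenerating to the identity, $a_n\to 0$ --- by a different mechanism. The paper splits according to whether $c_n/l$ is rational or irrational: in the irrational case the orbit of the translation is dense so $\mu_{s_n}$ is constant, and in the rational case $c_n=q_n/p_n$ it uses B\'ezout's identity to upgrade the period to $1/p_n$, notes $p_n\to\infty$, and approximates a fixed irrational $x_0$ by $r_n/p_n$ to produce in the limit an irrational period of $\mu_0$, contradicting non-symmetry. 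You instead show directly, via a three-epsilon estimate using only uniform convergence $\mu_{s_n}\to\mu_0$ and uniform continuity of $\mu_0$, that a sequence of periods $\delta_n\to 0^+$ forces $\mu_0$ to be constant, which already contradicts the hypothesis. Your route is more elementary and uniform --- it needs no arithmetic case split and subsumes both of the paper's subcases (indeed the paper's irrational branch also ends with a constant limit) --- while the paper's argument has the mild virtue of exhibiting an explicit symmetry of $\mu_0$ rather than full constancy. Both are complete; your normalization step replacing $a_n$ by $l-a_n$ to get an honest period $\delta_n=\min(a_n,l-a_n)\to 0^+$ is the right way to make ``$a_n\to 0$ in $\R/l\Z$'' usable, and the rest is airtight.
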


\begin{proof}
If the assertion fails,
then there exists a monotone decreasing sequence $\{s_n\}$ 
converging to $0$ such that
$\tilde \mu_n:=\mu_{s_n}$ has a certain symmetry,
that is,
there exist a constant 
$c_n\in [0,l)$
and a sign $\sigma_n\in \{1,-1\}$ such that
\begin{equation}\label{eq:cn}
\tilde \mu_n(\sigma_n t+c_n)=\tilde \mu_n(t).
\end{equation}
Since $\sigma_n=\pm 1$, by replacing $\{s_n\}$ by some
subsequence if necessary, we  may assume that 
$
\sigma:=\sigma_n
$
does not depend on $n$.
Since $\R/\Z$ is compact,
replacing $\{s_n\}$ by some
subsequence if necessary, we  may assume that 
$c_n$ converges to $c_0$.
Then taking the limit $n\to \infty$, we have
$
\mu_0(\sigma t+c_0)=\mu_0(t).
$
Since $\mu_0$ is non-symmetric, 
we have $\sigma=1$ and $c_0\in\{0,l\}$. Then, 
we may assume that $c_0=0$ without loss of generality. 
If $c_n$ is an irrational number, 
then $\{\tilde \sigma_n m+c_n\}_{m\in \Z}$
is dense in $S^1$, and
\eqref{eq:cn} yields that $\mu_n$ is a constant 
function. Since $\mu_0$ has no symmetries, 
we may assume that $c_n$ is a rational number
for sufficiently large $n$, and we can write
$
c_n:={q_n}/{p_n},
$
where $p_n$ and $q_n$ are relatively prime integers. 
Then there is a pair $(a,b)$ of integers such that
$
ap_n+bq_n=1
$
and
\begin{equation}\label{eq:x}
\mu_n(t)=\mu_n(t+bc_n)=\mu_n\!\!\left(t+\frac{1-ap_n}{p_n}\right)
=\mu_n\!\!\left(t+\frac{1}{p_n}
\right).
\end{equation}
Fix an irrational number $x_0\in (0,1)$.
Then there exist integers $r_n$
($n=1,2,3,...$) so that
$x_n:=r_n/p_n$ 
converges to $x_0$.
Since 
$
\mu_n(t)=\mu_n(t+x_n)
$
by \eqref{eq:x}, taking the limit as $n\to \infty$,
we have $
\mu_0(t)=\mu_0(t+x_0)
$,
contradicting the assumption that $\mu_0(t)$ has no symmetries. 
\end{proof}

\begin{acknowledgments}
The authors thank the referee for valuable comments.
\end{acknowledgments}

\end{document}